\newtheorem{theorem}{Theorem}[section]
\newtheorem{corollary}[theorem]{Corollary}
\newtheorem{lemma}[theorem]{Lemma}
\newtheorem{proposition}[theorem]{Proposition}
\newtheorem{definition-proposition}[theorem]{Definition-Proposition}
\newtheorem{lemma-notation}[theorem]{Lemma-Notation}
\theoremstyle{definition}
\newtheorem{definition}[theorem]{Definition}
\newtheorem{modification}[theorem]{Modification}
\newtheorem{example}[theorem]{Example}
\newtheorem{remark}[theorem]{Remark}
\newtheorem{notation}[theorem]{Notation}
\newcommand{\N}{\mathbb{N}}
\newcommand{\Z}{\mathbb{Z}}
\newcommand{\R}{\mathbb{R}}
\newcommand{\C}{\mathbb{C}}
\newcommand{\twopartdef}[4]
{
	\left\{
		\begin{array}{ll}
			#1 & \mbox{if } #2 \\
			#3 & \mbox{if } #4
		\end{array}
	\right.
}
\newcommand{\twobytwo}[4]
{
	\begin{pmatrix}
		#1 & #2 \\
		#3 & #4 \\
	\end{pmatrix}
}
\begin{document}
\title{Looijenga's Conjecture via Integral-Affine Geometry}

\author{Philip Engel}
\thanks{Research partially supported by NSF grant DMS-1502585.}
\address{Harvard University \\ Cambridge, MA 02139 \\ United States \\}
\email{engel@math.harvard.edu}

\begin{abstract} A cusp singularity is a surface singularity whose minimal resolution is a cycle of smooth rational curves meeting transversely. Cusp singularities come in naturally dual pairs. In 1981, Looijenga proved that whenever a cusp singularity is smoothable, the minimal resolution of the dual cusp is an anticanonical divisor of some smooth rational surface. He conjectured the converse. Recent work of Gross, Hacking, and Keel has proven Looijenga's conjecture using methods from mirror symmetry. This paper provides an alternative proof of Looijenga's conjecture based on a combinatorial criterion for smoothability given by Friedman and Miranda in 1983. \end{abstract}

\maketitle

\section{Introduction}

A {\it cusp singularity} $(\overline{V},p)$ is the germ of a minimally elliptic surface singularity such that the exceptional divisor of the minimal resolution $\pi\,:\,V\rightarrow \overline{V}$ is a reduced anticanonical cycle of smooth rational curves meeting transversely: $$\pi^{-1}(p)=D=D_1+\cdots+ D_n\in|-K_V|.$$ The analytic germ of a cusp singularity is uniquely determined by the self-intersections $D_i^2$ of the components of $D$. Cusp singularities come in naturally dual pairs $(\overline{V},p)$ and $(\overline{V}\,\!'\!,p')$, whose exceptional divisors $D$ and $D'$ are called {\it dual cycles}. For every pair of dual cusps, Inoue \cite{Ino77} constructs an associated {\it hyperbolic Inoue surface}---a smooth, non-algebraic, compact complex surface whose only curves are the components of two disjoint cycles $D$ and $D'$. Contracting $D$ and $D'$ produces a surface with two dual cusp singularities and no algebraic curves.

By working out the deformation theory of the contracted hyperbolic Inoue surface, Looijenga \cite{Loo81} proved that if the cusp with cycle $D'$ is smoothable, then there exists an {\it anticanonical pair} $(Y,D)$---a smooth rational surface $Y$ with an anticanonical divisor $D\in |-K_Y|$ whose components have the appropriate self-intersections. Conversely, Looijenga conjectured that the existence of such an anticanonical pair $(Y,D)$ implies the smoothability of the cusp with cycle $D'$. Recently, work of Gross, Hacking, and Keel proved Looijenga's conjecture using methods from mirror symmetry \cite{GHK11}. In this paper, we provide an alternative proof of Looijenga's conjecture.

In the first section, we review foundational material on cusp singularities, hyperbolic Inoue surfaces, anticanonical pairs, and discuss the main result of Friedman-Miranda \cite{FM83}: The cusp with resolution $D'$ is smoothable if there exists a simple normal crossings surface $\mathcal{X}_0=\bigcup V_i$ satisfying certain combinatorial conditions.

We begin the second section by defining the notion of a triangulated integral-affine surface (with singularities). Then, we associate a triangulated integral-affine surface, called the {\it pseudo-fan}, to any anticanonical pair $(V,D)$ and describe two surgeries on the pseudo-fan that correspond to blowing up a point on a component of $D$ and smoothing a node of $D$.  We describe a natural triangulated integral-affine structure on the dual complex $\Gamma(\mathcal{X}_0)$ of a surface $\mathcal{X}_0$ satisfying the conditions of Friedman-Miranda.

In the third section, we define two of surgeries on an integral-affine surface $S$---an internal blow-up and a node smoothing. Both surgeries appear in Symington's work \cite{Sym03} on almost toric fibrations of four-dimensional symplectic manifolds. Assuming certain conditions, there is a unique symplectic four-manifold $Y$ with a Lagrangian torus fibration $$(Y,D,\omega)\rightarrow S$$ which attains certain allowable singular fibers over the singular points of $S$. An internal blow-up or node smoothing of $S$ is the base of a Lagrangian torus fibration of an internal blow-up or node smoothing of $(Y,D,\omega)$, respectively. When $S$ is a disc whose boundary satisfies a negativity condition, we define an integral-affine completion $\hat{S}$ to a sphere by attaching a cone $C$ with a distinguished vertex $v_0$ to the boundary of $S$. Finally, we define the notion of an order $k$ refinement $S[k]$ of an integral-affine surface $S$.

In the fourth section, we construct from an anticanonical pair $(Y,D)$ a surface $\mathcal{X}_0$ satisfying the conditions of the theorem of Friedman-Miranda, thus proving Looijenga's conjecture:

\begin{enumerate}

\item We express $(Y,D)$ as a sequence of internal blow-ups and node smoothings of a toric surface $(\overline{Y},\overline{D})$. By performing the analogous surgeries on a moment polygon $\overline{S}$ for $(\overline{Y},\overline{D})$, we produce the base $S$ of an almost toric fibration of a symplectic surface $(Y,D,\omega)$. We complete to $\hat{S}$ and take an order $k$ refinement $\hat{S}[k]$ which admits a triangulation.

\item We show that a neighborhood of every vertex $v_i$ of the triangulation of $\hat{S}[k]$ with $i\neq 0$ is locally modeled by the pseudo-fan of an anticanonical pair $(V_i,D_i)$. The cone point $v_0$ is locally modeled by the pseudo-fan of the hyperbolic Inoue pair $(V_0,D')$. By gluing the surfaces $V_i$ together, we produce a surface $\mathcal{X}_0$ whose dual complex is $\hat{S}[k]$, as a triangulated integral-affine surface.

\end{enumerate}

\noindent The construction of $\mathcal{X}_0$ may be phrased solely in terms of operations on integral-affine surfaces---no results in symplectic geometry are necessary for the proof, but they provide the primary motivation. Our construction is algorithmic, providing a simple normal crossings resolution of at least one smoothing of any smoothable cusp singularity. We describe, without proof, four modifications and generalizations of the construction and conclude by giving an example of the modified construction in the charge three case $Q(Y,D)=3$. 

Our proof differs from the approach of Gross, Hacking, and Keel. Unlike \cite{GHK11} this work relies on the main theorem of \cite{FM83} to prove smoothability of the dual cusp. This approach has advantages and disadvantages. The delicate convergence arguments of \cite{GHK11} are avoided by using the deformation theory of global simple normal crossings varieties. On the other hand, the techniques of this paper can only see one-parameter smoothings, because smoothings with a higher dimensional base need not have SNC resolutions. The connection between the two proofs is still largely unexplored. In terms of mirror symmetry and the Gross-Siebert program, we use a ``fan" construction which appears to be Legendre dual to the ``polytope" construction of \cite{GHK11}. For us, the fundamental piece of data to prove smoothability of the dual cusp is the existence of $(Y,D)$, as a {\it symplectic} anticanonical pair. 

{\bf Acknowledgements.} I thank my advisor, Robert Friedman, for the time he spent editing this paper, and for his numerous suggestions, explanations, and clarifications. I thank Lucas Culler, who suggested a simplifying modification of the original construction. I'd also like to thank Mark Gross, Paul Hacking, and Sean Keel for their comments. In addition, I thank Eduard Looijenga for our productive conversation. Finally, I would like to thank the referee for their careful reading and excellent suggestions.

\section{Background} Let $(\overline{V},p)$ be the germ of a cusp singularity. The exception divisor of the minimal resolution $$\pi^{-1}(p)=D=D_1+\cdots+ D_n$$ is a cycle of smooth rational curves meeting transversely. We define $\ell(D):=n$ to be the length of the cycle. Whenever $n\geq 3$, we assume $D_i\cdot D_{i\pm 1}=1$, with indices taken mod $n$. If $n=1$, then $D$ is an irreducible, nodal rational curve. If $n=2$, then $D$ is the union of two smooth rational curves that meet transversely at two distinct points. We define $$d_i :=\twopartdef{-D_i^2}{n>1}{2-D_i^2}{n=1.}$$ The analytic germ of a cusp singularity is uniquely determined by the cycle ${\bf d}:=(d_1,\dots,d_n)$, well-defined up to cyclic permutation and orientation. As $D$ is contractible, Artin's contractibility criterion implies that the intersection matrix $[D_i\cdot D_j]$ is negative-definite. No component of $D$ is an exceptional curve, because it is a minimal resolution. The negative-definite condition is then equivalent to \begin{align*} &d_i\geq 2\,\,\,\,\,\,\,\,\,\,\textrm{ for all }i, \\ &d_i\geq 3\,\,\,\,\,\,\,\,\,\,\textrm{ for some }i.\end{align*}

Cusp singularities arise in the classification of complex analytic surfaces. Amongst those of Type $\textrm{VII}_0$ are the hyperbolic Inoue surfaces, which have first Betti number $b_1=1$ and Kodaira dimension $\kappa=-\infty$. For a construction, see \cite{Ino77}. The only curves on a hyperbolic Inoue surface $V$ are the components of two contractible cycles $D$ and $D'$ of rational curves satisfying $D+D'\in|-K_V|$. Both cycles can be blown down to give a surface $(\overline{V},p,p')$ with two {\it dual cusps}. For any cusp singularity $p$, there is a construction of $\overline{V}$ as the compactification of a quotient of $\mathbb{H}\times \C$ by a discrete group action (hence the terminology ``cusp"). Suppose that the cycle of negative self-intersections of $D$ is $${\bf d}=(d_1,\dots,d_n)=(a_1+3,\,\underbrace{2,\dots,2}_{b_1},\,\dots ,\, a_k+3,\,\underbrace{2,\dots,2}_{b_k})$$ with $a_i, b_i\geq 0$. An explicit formula for the negative self-intersections of the dual cycle $D'$ may be given from those of the original cycle $D$ by interchanging $a_i$ and $b_i$: $${\bf d'}=(d_1',\dots,d_s')=(b_1+3,\,\underbrace{2,\dots,2}_{a_1},\,\dots ,\, b_k+3,\,\underbrace{2,\dots,2}_{a_k}).$$ 

Let $(\overline{V},p,p')$ denote the (disconnected) germ of the two cusp singularities on the doubly contracted hyperbolic Inoue surface $\overline{V}$. Looijenga \cite{Loo81} proves:

\begin{theorem}\label{loo1} $\overline{V}$ has a universal deformation which is semi-universal for the germ $(\overline{V},p,p')$. \end{theorem}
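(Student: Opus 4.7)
My plan is to reduce Theorem \ref{loo1} to vanishing statements for the cohomology of the tangent sheaf $T_{\overline{V}}$, and then to verify those vanishings using the explicit structure of the Hirzebruch-Inoue surface. Universality (rather than mere versality) of the Kuranishi deformation of $\overline{V}$ is equivalent to $H^0(\overline{V},T_{\overline{V}})=0$, i.e., the absence of infinitesimal automorphisms. Semi-universality of the restricted family for the germ $(\overline{V},p,p')$ is equivalent to the restriction map $T^1(\overline{V}) \to T^1_{(\overline{V},p)}\oplus T^1_{(\overline{V},p')}$ being an isomorphism.

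The identification is made through the local-to-global Ext exact sequence
\begin{equation*}
0 \to H^1(\overline{V},T_{\overline{V}}) \to T^1(\overline{V}) \to H^0(\overline{V},\mathcal{T}^1_{\overline{V}}) \to H^2(\overline{V},T_{\overline{V}}),
\end{equation*}
in which $\mathcal{T}^1_{\overline{V}}=\mathcal{E}xt^1(\Omega^1_{\overline{V}},\mathcal{O}_{\overline{V}})$ is supported at $\{p,p'\}$ with global sections equal to $T^1_{(\overline{V},p)}\oplus T^1_{(\overline{V},p')}$. Hence the whole theorem reduces to showing $H^i(\overline{V},T_{\overline{V}})=0$ for $i=0,1,2$. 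I would then pass to the minimal resolution $\pi\colon V\to\overline{V}$, where $V$ is the smooth Hirzebruch-Inoue surface, and identify $\pi_*T_{\overline{V}}$ with the subsheaf $T_V(-\log(D\cup D'))$ of vector fields logarithmic along the two cycles. Inoue's presentation of $V\setminus(D\cup D')$ as a quotient $(\mathbb{H}\times\C)/G$ by an explicit rank two solvable group of affine transformations is the key input: a putative global logarithmic vector field lifts to a $G$-invariant holomorphic field on $\mathbb{H}\times\C$, and the hyperbolic part of the $G$-action leaves no such field with admissible boundary behavior along $D$ and $D'$. For $H^1$ and $H^2$, a Mayer-Vietoris argument for the cover $V=U_D\cup U_{D'}\cup(V\setminus(D\cup D'))$ combines the group cohomology of $G$ on the complement with local vanishings on the toric-like tubular neighborhoods $U_D$ and $U_{D'}$.

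The principal obstacle is that $V$ is non-algebraic and non-Kähler, so Kodaira-type vanishing theorems are not directly available. I would circumvent this by exploiting the Inoue structure on two fronts: the complement is a $K(G,1)$, which reduces its tangent-sheaf cohomology to the group cohomology of $G$ with coefficients in the explicitly known $G$-module $H^*(\mathbb{H}\times\C,T)$; while the neighborhoods $U_D, U_{D'}$ admit toric models, on which the cohomology of logarithmic tangent sheaves can be computed directly from the combinatorics of the self-intersections $(d_1,\dots,d_n)$ and their duals. Once $H^i(\overline{V},T_{\overline{V}})=0$ has been established for $i=0,1,2$, Grauert's general theorem on semi-universal deformations of compact analytic spaces produces the Kuranishi family, and the local-to-global sequence above upgrades it to a universal deformation that restricts to the semi-universal deformation of the germ $(\overline{V},p,p')$.
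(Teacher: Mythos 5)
The paper does not prove this statement: it is quoted verbatim from Looijenga \cite{loo}, so there is no internal proof to compare against. Measured against Looijenga's actual argument, your outline is essentially the classical route rather than a new one: reduce via the local-to-global $\mathrm{Ext}$ sequence to the vanishing of $H^i(\overline{V},T_{\overline{V}})$, pass to the minimal resolution $V$ (the smooth Hirzebruch--Inoue surface), and prove $H^i(V,\Theta_V(-\log(D+D')))=0$ for all $i$ using Inoue's explicit construction of $V\setminus(D\cup D')$ as a quotient of $\mathbb{H}\times\C$ by a solvable affine group. That vanishing is precisely Looijenga's key lemma, so the skeleton of your plan is sound and correctly identifies the hard analytic content.

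There are, however, two places where the plan as written has real gaps. First, the passage between $\overline{V}$ and $V$ is not free: you need both the identification $\pi_*\Theta_V(-\log(D+D'))\cong T_{\overline{V}}$ (which uses that the cycles are negative definite, so that sections of $\Theta_V$ near the exceptional set are automatically logarithmic) and, more seriously, the vanishing $R^1\pi_*\Theta_V(-\log(D+D'))=0$. The Leray sequence gives $H^1(\overline{V},\pi_*\mathcal{F})=0$ from $H^1(V,\mathcal{F})=0$ for free, but $H^2(\overline{V},T_{\overline{V}})=0$ --- which is what kills the obstruction comparison and makes the global family semi-universal for the germ --- does \emph{not} follow without the $R^1\pi_*$ vanishing. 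That vanishing is a genuine theorem about cusp singularities (it is the statement that cusps admit no nontrivial equisingular deformations), not a formal consequence of your setup, and it is exactly the content your ``toric-like tubular neighborhoods'' step sweeps under the rug: the neighborhoods $U_D$, $U_{D'}$ are not toric, and $H^1(U_D,\Theta(-\log D))$ is the quantity to be shown zero, not an input you can read off from the combinatorics of $(d_1,\dots,d_n)$. Second, two of your ``equivalent to'' claims should be ``implied by'': $H^0(T_{\overline{V}})=0$ implies universality of the Kuranishi family but is not equivalent to it, and semi-universality for the germ needs injectivity on obstruction spaces in addition to the isomorphism on $T^1$ (your $H^2$ vanishing does supply this, but the logic should be stated as a sufficient condition). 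With these points repaired --- all of which are in Looijenga's paper --- the argument goes through.
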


Suppose that $p'$ is smoothable. By Theorem \ref{loo1}, there exists a deformation $$\pi:\mathcal{V}\rightarrow \Delta$$ over an analytic disc, with $\mathcal{V}_0=\overline{V}$, which keeps the germ $(\overline{V},p)$ constant while smoothing the germ $(\overline{V},p')$. Any fiber $\mathcal{V}_t$ with $t\neq 0$ is a surface with a single cusp singularity $p=p_t$. Simultaneously resolving the singularities $p_t$ produces a family $\mathcal{Y}\rightarrow \Delta$ whose central fiber is the partially contracted hyperbolic Inoue surface with cusp singularity $p'$ and whose general fiber is a smooth surface. Any fiber $\mathcal{Y}_t$ with $t\neq 0$ is a simply connected surface with anticanonical divisor $D$, which by the classification of surfaces must be rational. Hence, the following corollary to Theorem \ref{loo1}:

\begin{corollary}\label{loo2} Suppose that $D'$ contracts to a smoothable cusp singularity. Then, $D$ is the anticanonical divisor of some rational surface.\end{corollary}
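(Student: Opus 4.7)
The plan is to build the desired rational anticanonical pair by deforming the doubly contracted Hirzebruch--Inoue surface $\overline{V}$ in a direction that smooths the cusp $p'$ while preserving $p$, then simultaneously resolving the preserved cusp across the family and analyzing the general fiber.

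First I would invoke Theorem \ref{loo1}: since $\overline{V}$ admits a deformation that is semi-universal for the disconnected germ $(\overline{V},p,p')$, the base contains a direction along which $p'$ is smoothed (this direction exists by the smoothability hypothesis on $p'$) and $p$ is held rigid. Pulling back to a small analytic disc $\Delta$ along such a direction produces a family $\pi:\mathcal{V}\to\Delta$ with $\mathcal{V}_0=\overline{V}$ whose fibers $\mathcal{V}_t$, for $t\neq 0$, are surfaces with a single unchanged cusp singularity $p_t\cong p$ and no other singularities.

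Next I would carry out a simultaneous resolution of the cusp $p_t$ over $\Delta$. Because the analytic germ of a cusp is uniquely determined by its cycle $\mathbf{d}$ and the minimal resolution has exceptional divisor $D$ with fixed intersection pattern, the resolution extends in families: blowing up along the section cut out by the locally trivial family of cusps $\{p_t\}$ yields $\mathcal{Y}\to\Delta$ whose central fiber is the partial Hirzebruch--Inoue surface with only $p'$ contracted, and whose general fiber $\mathcal{Y}_t$ is smooth and contains a cycle of rational curves isomorphic to $D$. Since $D+D'\in|-K_{\overline{V}}|$ on the original Hirzebruch--Inoue surface and the class of $D$ is preserved under the smoothing of $p'$ (which merely replaces the boundary $D'$ of a regular neighborhood by its Milnor fiber), the cycle $D\subset\mathcal{Y}_t$ is an anticanonical divisor.

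It remains to argue that $\mathcal{Y}_t$ is rational. Because $b_1(V)=1$ for the Hirzebruch--Inoue surface and smoothing the cusp $p'$ contributes a relation that kills the loop generating $H_1$ (the vanishing cycle of the smoothing is a Lagrangian generating $b_1$), the general fiber $\mathcal{Y}_t$ is simply connected. Then $\mathcal{Y}_t$ is a smooth simply connected complex surface carrying an effective anticanonical divisor, so it has Kodaira dimension $-\infty$; by the Enriques--Kodaira classification, the only simply connected surfaces of Kodaira dimension $-\infty$ are rational, which gives the desired conclusion that $D$ is the anticanonical cycle of a rational surface.

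The main obstacle I anticipate is the topological step: controlling the fundamental group of the smoothing $\mathcal{Y}_t$ and verifying that $D$ really persists as an anticanonical divisor through the simultaneous resolution. Both amount to understanding the Milnor fiber of a cusp smoothing and its interaction with the ambient topology of $\overline{V}$; once this is in hand, the rest is a formal application of Theorem \ref{loo1} together with surface classification.
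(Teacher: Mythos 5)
Your proposal follows essentially the same route as the paper: invoke Theorem \ref{loo1} to obtain a one-parameter deformation smoothing $p'$ while fixing $p$, simultaneously resolve the constant cusp $p_t$ to get a family $\mathcal{Y}\to\Delta$, and observe that the general fiber is a simply connected surface carrying $D$ as an anticanonical divisor, hence rational by the classification of surfaces. The extra detail you supply about the Milnor fiber killing the generator of $b_1$ is a reasonable filling-in of a step the paper simply asserts.
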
 Looijenga conjectured the converse, which by the work of Gross, Hacking, and Keel \cite{GHK11} on mirror symmetry for anticanonical pairs, is now a theorem:

\begin{theorem}[Looijenga's Conjecture]\label{looij} If $D$ is the anticanonical divisor of some rational surface, then the cusp singularity associated to $D'$ is smoothable. \end{theorem}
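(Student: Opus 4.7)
The plan is to reduce Looijenga's Conjecture to the Friedman--Miranda criterion recalled in Section 1: it suffices, given an anticanonical pair $(Y,D)$ with $D$ having the prescribed self-intersections, to construct a simple normal crossings surface $\mathcal{X}_0=\bigcup V_i$ whose dual complex carries the combinatorial data guaranteeing that $D'$ smooths. The route to building $\mathcal{X}_0$ will be entirely via integral-affine geometry: each $V_i$ will be a rational anticanonical pair (a toric blow-up of a Hirzebruch-Inoue surface along one of its cycles in the distinguished case), read off from a local model of an integral-affine sphere $\hat{S}$, and the gluing data between the $V_i$ will be encoded by the integral-affine structure along the edges of a triangulation of $\hat{S}$.

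First I would set up the dictionary between anticanonical pairs and integral-affine surfaces with boundary through the pseudo-fan construction of Section 2, and translate the two basic birational operations---internal blow-up of a point on $D$ and smoothing a node of $D$---into the two surgeries on integral-affine surfaces described in Section 3, namely an internal blow-up (insertion of a focus-focus singularity with its monodromy invariant line) and a node smoothing (replacing a corner of the polygonal boundary by a straight segment with an added focus-focus singularity). By a standard minimal model argument for anticanonical pairs, I can write $(Y,D)$ as a sequence of such surgeries starting from a toric model $(\overline{Y},\overline{D})$. Performing the same surgeries on the moment polygon $(\overline{S},\overline{P},\emptyset)$ produces an integral-affine surface $(S,P,A)$ with polygonal boundary $P$ and an interior set of focus-focus singularities $A$, which is the base of a Lagrangian torus fibration of $(Y,D,\omega)$.

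Next I would close $(S,P,A)$ off to a sphere by gluing to $P$ the cone $C$ over $\partial S$; the crucial point is that the monodromy of the integral-affine structure around a collar of $\partial S$ has a unique fixed vector $v_0$, which lets me extend the developing map across $C$ so that $C$ carries a compatible integral-affine structure with one singular cone point at the apex. This integral-affine sphere $\hat{S}=S\cup C$ should be triangulated by lattice triangles of area $\tfrac{1}{2}$; at every vertex $v_i\neq v_0$ a star neighborhood will match the pseudo-fan of some smooth rational anticanonical pair $(V_i,D_i)$, while at $v_0$ the local structure is that of the pseudo-fan of the Hirzebruch--Inoue pair $(V_0,D')$---this is exactly how the dual cusp $D'$ enters the construction. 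Gluing the $V_i$ along the components of their $D_i$ prescribed by the edges of the triangulation yields the desired $\mathcal{X}_0$, and the combinatorial hypotheses of Friedman--Miranda will follow from the integral-affine compatibility of triangles along each edge.

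The main obstacles I expect are: first, proving that $C$ admits an integral-affine structure which is compatible with the one already present along $\partial S$, which amounts to a monodromy computation verifying that the developing map extends to a well-defined singular point at $v_0$ with the correct local model (this is where the dual cycle $D'$ appears, and it must be checked that its cycle of self-intersections is precisely the one dictated by Looijenga's duality formula); and second, producing a lattice triangulation of $\hat{S}$ into triangles of area $\tfrac{1}{2}$ whose vertex stars are all realized by pseudo-fans of smooth rational anticanonical pairs, which is a non-trivial combinatorial constraint on the integral-affine structure---each interior vertex must have an integer angle deficit matching the charge pattern of some rational pair, and focus-focus singularities of $A$ must be absorbed into appropriate vertices. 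The verification that the Friedman--Miranda combinatorial conditions hold once $\mathcal{X}_0$ is assembled then becomes a local statement at each edge of the triangulation, following from the fact that neighboring pseudo-fans glue via coordinate changes in $SL_2(\mathbb{Z})$.
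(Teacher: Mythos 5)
Your proposal follows essentially the same route as the paper: express $(Y,D)$ via internal blow-ups and node smoothings of a toric model, perform the corresponding Symington surgeries on a moment polygon, cap off the resulting $(S,P,A)$ with the cone over its boundary using the fixed point of the boundary monodromy, triangulate the resulting integral-affine sphere into basis triangles, identify vertex stars with pseudo-fans (the cone point giving the Hirzebruch--Inoue pair $(V_0,D')$), and invoke the Friedman--Miranda smoothing criterion. The two obstacles you flag---extending the structure over the cone with the correct local model at $v_0$, and arranging that every vertex star is a realizable pseudo-fan---are exactly the points the paper resolves in Parts 2 and 3 of its construction.
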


Now we review some basic facts about rational surfaces with an anticanonical cycle $D$. Such surfaces are log generalizations of K3 surfaces: They are simply connected surfaces with a global non-vanishing meromorphic $2$-form with poles along a simple normal crossings divisor $D$:

\begin{definition} An {\it anticanonical pair} or simply {\it pair} $(Y,D)$ is a rational surface $Y$ with an anticanonical divisor $D$ equal to a cycle of rational curves $$D=D_1+\cdots +D_n\in |-K_Y|$$ meeting transversely. A {\it negative-definite pair} satisfies the additional condition that the intersection matrix $[D_i\cdot D_j]$ is negative-definite. A {\it toric pair} is a pair where $Y$ is a toric surface and $D$ is the toric boundary.

\end{definition}

Let $E$ be an exceptional curve on $(Y,D)$---by this we always mean an exceptional curve of the first kind, i.e. $E\cong \mathbb{P}^1$ with $E^2=-1$. Contracting $E$ gives a smooth anticanonical pair: $$\pi:(Y,D)\rightarrow(\overline{Y},\overline{D}).$$ If $E$ is a component of $D$, then $E$ contracts to a node point of the cycle $\overline{D}$. In this case, we call $\pi$ a {\it corner blow-up}. If $E$ is not a component of $D$, then $E$ intersects $D$ at one of its smooth points. Thus, $E$ contracts to a smooth point of the cycle $\overline{D}$, in which case, we call $\pi$ an {\it internal blow-up}. Conversely, given any anticanonical pair, we can blow-up either a corner or a smooth point of the cycle to produce a new anticanonical pair. In addition to blowing up on $D$, we can smooth any node of $D$:

\begin{proposition}\label{def} Let $(V,D)$ be an anticanonical pair and let $p$ be a node of $D$. There exists a family of anticanonical pairs $$(\mathcal{V},\mathcal{D})\rightarrow \Delta$$ over the disc whose central fiber is $(V,D)$ such that $\mathcal{D}\rightarrow \Delta$ is a smoothing of the node $p$.  \end{proposition}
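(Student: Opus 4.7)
The plan is to construct the family $(\mathcal{V}, \mathcal{D}) \to \Delta$ by combining a standard local model at $p$ with the log deformation theory of the pair $(V,D)$.

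\textbf{Local model.} First I would choose analytic coordinates $(x,y)$ on a neighborhood $U$ of $p$ in $V$ so that $D \cap U = \{xy=0\}$. The threefold $U \times \Delta$, equipped with the relative divisor $\{xy = t\}$ where $t$ is the coordinate on $\Delta$, is smooth and restricts to $(U, D\cap U)$ at $t=0$ while its general fiber has $p$ smoothed. This produces the desired deformation in a neighborhood of $p$.

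\textbf{Global extension.} The crux is to extend this local model to a global family. First-order deformations of the pair $(V,D)$ that are allowed to smooth some nodes of $D$ are organized by a local-to-global sequence: the local smoothing classes at the nodes of $D$ contribute a skyscraper sheaf with one-dimensional stalks, and the obstruction to extending a given stalk class to a global first-order deformation lies in $H^2(V, T_V(-\log D))$. Since $K_V + D \sim 0$, the pair is log Calabi--Yau, so a logarithmic Tian--Todorov type unobstructedness theorem for log Calabi--Yau pairs implies that deformations are unobstructed. In particular the local smoothing class at $p$ lifts to a global first-order deformation and then integrates to a genuine one-parameter analytic family $(\mathcal{V}, \mathcal{D}) \to \Delta$. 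The relative dualizing condition $\omega_{\mathcal{V}/\Delta} \cong \OK_\mathcal{V}(-\mathcal{D})$ propagates from the central fiber to a neighborhood by flatness, so each fiber $(\mathcal{V}_t, \mathcal{D}_t)$ is an anticanonical pair.

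\textbf{Main obstacle.} The subtle point is that one genuinely must allow the ambient surface $V$ itself to deform: for anticanonical pairs whose $-K_V$ is not nef (which is forced in the negative-definite case relevant to this paper, since $(-K_V)^2 = D^2 < 0$), typically $h^0(-K_V) = 1$, so no smoothing of $D$ exists inside $|-K_V|$ on a fixed $V$ and the smoothing of $p$ must be accompanied by a nontrivial deformation of the ambient surface. The log Calabi--Yau unobstructedness is precisely what produces this simultaneous deformation; verifying it rigorously is the technical heart of the argument. An alternative, more hands-on route would be to reduce to a toric model $(\bar V, \bar D)$ by contracting a sequence of internal $(-1)$-curves, smooth the corresponding node of $\bar D$ in an explicit one-parameter family of anticanonical pairs built from deformations of the moment polygon (plus, where necessary, deformations of $\bar V$ itself), and pull the smoothing back through the blow-up sequence, but correctly bookkeeping which nodes are tracked through each internal blow-up is delicate.
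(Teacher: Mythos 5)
Your proposal is correct and is essentially the argument behind the paper's own proof, which simply cites Corollary~3.5 of Friedman's paper on anticanonical pairs: deformations of the pair $(V,D)$ surject onto the local smoothing parameters at the nodes of $D$, with the obstruction to this surjectivity (and to integrating the first-order deformation) living in $H^2(V,T_V(-\log D))$. The one simplification worth noting is that no logarithmic Tian--Todorov theorem is needed: since $V$ is rational and $K_V+D\sim 0$, Serre duality gives $H^2(V,T_V(-\log D))\cong H^0(V,\Omega^1_V(\log D)(-D))^*$, which vanishes because $\Omega^1_V(\log D)(-D)\subset\Omega^1_V$ and $h^0(\Omega^1_V)=0$.
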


\begin{proof} The proposition follows from Corollary 3.6 of \cite{Fri15}, which proves the result for any subset of the nodes of $D$. Roughly, the deformations of $(V,D)$ surject onto the deformations of $D$. \end{proof}

\begin{definition} The {\it charge} of a cycle $D$ or of a pair $(Y,D)$ is defined by the formula $$Q(D)=Q(Y,D):=12+\sum_{i=1}^n \,(d_i-3)=12+\sum_{i=1}^n (a_i-b_i).$$ \end{definition} \noindent  The formula for the dual cusp $D'$ interchanges $a_i$ with $b_i$ and thus $Q(D)+Q(D')=24$. The charge of an anticanonical pair $(Y,D)$ is essentially a measure of how far it is from being toric: All toric pairs have charge zero, while all other anticanonical pairs have positive charge.

\begin{remark}\label{changes} Let $(Y,D)$ be a pair. An internal blow-up on $D_i$ changes the cycle ${\bf d}$ by $$(\dots,\,d_i,\,\dots)\,\,\mapsto\,\, (\dots,\,d_i+1,\,\dots)$$ and increases the charge by $1$. A corner blow-up at $D_i\cap D_{i+1}$ changes the cycle ${\bf d}$ by $$(\dots,\,d_i,\,d_{i+1},\,\dots)\,\,\mapsto\,\, (\dots,\,d_i+1,\,1,\,d_{i+1}+1,\,\dots)$$ and keeps the charge constant. A node smoothing at $D_i\cap D_{i+1}$ changes the cycle ${\bf d}$ by $$(\dots,\, d_i,\,d_{i+1},\,\dots )\,\,\mapsto\,\, (\dots,\,d_i+d_{i+1}-2,\,\dots)$$ and increases the charge by $1$. 
\end{remark}

Consider a smoothing family $\mathcal{Y}\rightarrow \Delta$ whose central fiber is the partially contracted hyperbolic Inoue surface with cusp singularity $p'$. Using the same methods as Kulikov \cite{Kul77} and Persson and Pinkham \cite{PP81} in their study of degenerations of K3 surfaces, Friedman and Miranda \cite{FM83} prove that after a finite base change and bi-meromorphic modifications on $\mathcal{Y}\rightarrow \Delta$, we can produce a smooth family $\mathcal{X}\rightarrow \Delta$ such that $\mathcal{D}\in|-K_{\mathcal{X}}|$ is a divisor restricting to $D$ on every fiber and the central fiber is a simple normal crossings surface. In analogy with so-called Type III degenerations of K3 surfaces, the central fiber $$\mathcal{X}_0=\bigcup_{i=0}^nV_i$$ of the family $\mathcal{X}\rightarrow \Delta$ satisfies the following conditions:

\begin{enumerate}
\item[i.] $V_0$ is the hyperbolic Inoue surface with cycles $D$ and $D'$. For $i>0$, the normalization $\tilde{V}_i$ of each $V_i$ is a smooth rational surface.
\item[ii.] Let $D_{ij}$ denote an irreducible double curve of $\mathcal{X}_0$ lying on $V_i$ and $V_j$ (if $V_i$ is not normal, we may have $i=j$). Define $D_i$ to be the union of the double curves $D_{ij}$ contained in $V_i$. Let $\tilde{D}_i$ be the inverse image of $D_i$ under the normalization map $\tilde{V}_i\rightarrow V_i$. Then $$(\tilde{V}_i,\tilde{D}_i)$$ is an anticanonical pair for $i>0$ and $D_0=D'$. 
\item[iii.] (Triple Point Formula) Let $D_{ij}$ be a double curve joining the surfaces $V_i$ and $V_j$. Then $$\left(D_{ij}\big{|}_{\tilde{V}_i}\right)^2+\left(D_{ij}\big{|}_{\tilde{V}_j}\right)^2=\twopartdef{-2}{D_{ij} \textrm{ is smooth}}{0}{D_{ij} \textrm{ is nodal.}}$$
\item[iv.] The dual complex of $\mathcal{X}_0$ is a triangulation of the sphere.
\end{enumerate} 

\begin{definition} We call a surface $\mathcal{X}_0$ satisfying conditions i.-iv. a {\it Type III anticanonical pair} $(\mathcal{X}_0,D)$. \end{definition}

Conditions i.-iv. are the only combinatorial conditions necessary to ensure that $\mathcal{X}_0$ smooths to an anticanonical pair $(Y,D)$ in a family $\mathcal{X}\rightarrow \Delta$. The remaining condition, $d$-semistability, is analytic: $$T^1_{\mathcal{X}_0}:=\mathcal{E}\!{\it xt}^1_{\mathcal{O}_{\mathcal{X}_0}}(\Omega^1_{\mathcal{X}_0},\mathcal{O}_{\mathcal{X}_0})\cong \mathcal O_{sing(\mathcal{X}_0)}.$$ Any Type III anticanonical pair has a topologically trivial deformation to one which is $d$-semistable by \cite{FM83}, Lemma 2.6. Motivated by the result of \cite{Fri83} in the case of Type III K3 surfaces, \cite{FM83} prove that a $d$-semistable Type III anticanonical pair $(\mathcal{X}_0,D)$ smooths to an anticanonical pair $(Y,D)$. By a result of Shepherd-Barron \cite{SB83}, the union of the surfaces $V_i$ for $i>0$ can be contracted to a point, assuming we also contract the cycle $D'$ on $V_0$. Thus, the existence of $(\mathcal{X}_0,D)$ implies that $D'$ is smoothable. Hence, the Friedman-Miranda criterion \cite{FM83}:

\begin{theorem}\label{fm} The cusp singularity associated to $D'$ is smoothable if and only if there exists a Type III anticanonical pair $(\mathcal{X}_0,D)$. \end{theorem}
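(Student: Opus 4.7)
The plan is to prove the biconditional in two directions using distinct techniques. The forward implication---smoothability of $D'$ yields a Type III anticanonical pair---proceeds by a Kulikov-style semistable reduction of the smoothing family furnished by Theorem \ref{loo1}. The reverse implication combines a topologically trivial adjustment to a $d$-semistable model, the smoothing theorem of Friedman--Miranda (modeled on \cite{fri} for K3 surfaces), and the simultaneous contraction criterion of Shepherd-Barron \cite{sb}.

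For the forward direction, assume $p'$ is smoothable. By Theorem \ref{loo1} there is a deformation $\pi : \mathcal{V} \to \Delta$ with $\mathcal{V}_0 = \overline{V}$ keeping the germ at $p$ constant while smoothing $p'$; simultaneously resolving the cusps $p_t$ of the fibers produces a family $\mathcal{Y} \to \Delta$ whose central fiber is the partially contracted Hirzebruch-Inoue surface. After a finite base change to stabilize monodromy, I would iteratively blow up the codimension-two singular locus of the total space and contract superfluous exceptional curves meeting the double locus unfavorably, following Kulikov \cite{kul} and Persson-Pinkham \cite{pp}. This produces a smooth family $\mathcal{X} \to \Delta$ whose central fiber $\mathcal{X}_0$ has simple normal crossings and which carries a relative anticanonical divisor $\mathcal{D}$ restricting to $D$ on every fiber. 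Conditions i.--iv. then follow from: i. preservation of the Inoue component $V_0$ throughout the modifications; ii. adjunction on each $\tilde V_i$ applied to the relatively trivial canonical class; iii. triviality of $\mathcal{O}_{\mathcal{X}}(V_i + V_j)|_{D_{ij}}$ on a smooth total space, which is the Triple Point Formula; and iv. a Clemens-Schmid-type computation using the simple connectivity of the rational generic fiber, identifying the dual complex with a triangulated sphere.

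For the reverse direction, suppose $(\mathcal{X}_0, D)$ is a Type III anticanonical pair. I first pass to a $d$-semistable model via a topologically trivial deformation: the anticanonical components $(\tilde V_i, \tilde D_i)$ possess positive-dimensional moduli (via Proposition \ref{def} and corner blow-ups/blow-downs along the double locus), and moving within these moduli can be arranged to annihilate the obstruction class in $T^1_{\mathcal{X}_0}$ governing $d$-semistability. Once $d$-semistable, the Friedman-Miranda smoothing theorem yields a flat family $\mathcal{X} \to \Delta$ whose general fiber is a smooth anticanonical pair $(Y, D)$. Finally, by \cite{sb}, the union $\bigcup_{i > 0} V_i$ together with the cycle $D'$ on $V_0$ can be simultaneously contracted across the entire family; the resulting contracted family has central fiber $\overline{V}$ and a general fiber carrying only the original cusp $p$, thereby exhibiting a smoothing of $p'$.

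The main obstacle is the semistable reduction step in the forward direction: one must carry out the bi-meromorphic modifications so as to simultaneously preserve the Hirzebruch-Inoue structure on $V_0$, keep the relative anticanonical divisor $\mathcal{D}$ effective and globally defined, and ensure that the dual complex of the central fiber is a triangulated sphere rather than a more complicated CW complex. This requires a careful Kulikov-type classification of which singularities on the total space can appear and which surgeries (divisorial contractions versus flips) are allowable. The $d$-semistability adjustment and the smoothing theorem in the reverse direction are also technically demanding, but proceed largely by adapting the log machinery of \cite{fri} to the anticanonical pair setting and invoking standard obstruction calculus for SNC degenerations.
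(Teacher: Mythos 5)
Your outline matches the paper's treatment exactly: the paper does not prove this theorem itself but quotes it as the main result of \cite{fm}, and its surrounding discussion sketches precisely the two-directional argument you give (Kulikov/Persson--Pinkham semistable reduction producing a central fiber satisfying conditions i.--iv.\ for the forward direction; a topologically trivial deformation to a $d$-semistable model, the smoothing theorem modeled on \cite{fri}, and the Shepherd-Barron contraction of $\bigcup_{i>0}V_i\cup D'$ for the converse). Your proposal is a reasonable expansion of that same outline, with the genuinely hard technical content deferred to \cite{fm} in both cases.
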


\begin{notation} To simplify the notation, we will henceforth suppress the tildes on $(\tilde{V}_i,\tilde{D}_i)$ so that $(V_i,D_i)$ denotes a smooth anticanonical pair. In addition, we introduce the convention $$D_{ij}=D_{ij}\big{|}_{V_i}\,\,\textrm{ and }\,\,D_{ji}=D_{ij}\big{|}_{V_j}$$ so that $D_{ij}$ always denotes a curve on the smooth surface $V_i$. Then $D_{ij}$ and $D_{ji}$ have equal image in $\mathcal{X}_0$ but may not be isomorphic. In fact, the image of $D_{ij}$ in $\mathcal{X}_0$ is nodal if and only if exactly one of $D_{ij}$ or $D_{ji}$ is nodal. We define $$d_{ij}:=\twopartdef{-D_{ij}^2}{\ell(D_i)\geq 2}{2-D_{ij}^2}{\ell(D_i)=1.}$$ Then, the triple point formula states that $d_{ij}+d_{ji}=2$ in all cases. \end{notation}

\begin{proposition}[Conservation of Charge] \label{charge} Let $(\mathcal{X}_0,D)$ be a Type III anticanonical pair. Then, $$\sum Q(V_i,D_i)=24.$$ \end{proposition}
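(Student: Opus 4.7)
The plan is to reduce the identity to a purely combinatorial computation on the dual complex $\Gamma(\mathcal{X}_0)$, using only the definition of $Q$, the triple point formula, and condition iv (that $\Gamma(\mathcal{X}_0)$ triangulates $S^2$). I would not need any further geometric input.

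First I would unfold the definition of charge. Writing $\ell_i := \ell(D_i)$ for the length of the cycle on $V_i$ (including $i = 0$, where $D_0 = D'$), the definition gives
$$Q(V_i, D_i) \;=\; 12 \;-\; 3\ell_i \;+\; \sum_{j} d_{ij},$$
where $j$ runs over the components of $D_i$. Summing over $i = 0, \dots, n$,
$$\sum_i Q(V_i, D_i) \;=\; 12V \;-\; 3 \sum_i \ell_i \;+\; \sum_{i,j} d_{ij},$$
where $V = n+1$ is the number of vertices of $\Gamma(\mathcal{X}_0)$.

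Next I would interpret the two remaining sums as counts of edges of $\Gamma(\mathcal{X}_0)$. Writing $E$ for the number of double curves, the handshake identity $\sum_i \ell_i = 2E$ holds because every double curve $D_{ij}$ supplies one component to the cycle on each of its two adjacent surfaces (in the non-normal case $V_i = V_j$ this still contributes $2$ to $\ell_i$). For the second sum, the triple point formula gives $d_{ij} + d_{ji} = 2$; one has to verify this uniformly, checking that the two cases $\ell_i \geq 2$ and $\ell_i = 1$ of the definition of $d_{ij}$ combine with the two cases of the triple point formula (smooth double curve $\Rightarrow$ sum of squares $-2$, nodal $\Rightarrow$ sum of squares $0$) to give the value $2$ on the nose in every scenario. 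Grouping the double sum across edges then yields $\sum_{i,j} d_{ij} = 2E$.

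Substituting produces
$$\sum_i Q(V_i, D_i) \;=\; 12V - 6E + 2E \;=\; 12V - 4E.$$
Finally I would invoke condition iv: since $\Gamma(\mathcal{X}_0)$ is a triangulation of $S^2$, Euler's formula gives $V - E + F = 2$, while each face is a triangle so $3F = 2E$. Eliminating $F$ yields $E = 3V - 6$, and substituting gives $12V - 4(3V-6) = 24$, as desired.

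The only genuinely tricky step is the bookkeeping for non-normal components and nodal double curves: one must confirm that a nodal component $D_{ij} \subset V_i$ with $\ell_i = 1$ contributes the shift $+2$ in the definition of $d_{ij}$ that exactly compensates for the extra $+2$ in the corresponding case of the triple point formula, and that a self-glued double curve contributes two components to $\ell_i$. Once this case check is out of the way, the proof is just Euler's formula.
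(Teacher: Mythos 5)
Your argument is correct and complete. The paper itself gives no proof of this proposition---it simply cites Friedman--Miranda (Proposition 3.7 of \cite{fm})---and your computation is, in substance, exactly the argument that reference supplies: unfold $Q(V_i,D_i)=12-3\ell_i+\sum_j d_{ij}$, use the handshake identity $\sum_i\ell_i=2E$ and the uniform triple point formula $d_{ij}+d_{ji}=2$ to get $\sum_i Q=12V-4E$, and finish with Euler's formula plus $3F=2E$ for a triangulation of $S^2$. The one case check you flag as ``genuinely tricky'' is already dispatched in the paper's Notation, which records that the normalization $d_{ij}+d_{ji}=2$ holds \emph{in all cases} (smooth versus nodal double curves, normal versus non-normal components), so nothing further is needed.
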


\begin{proof} See \cite{FM83}, Proposition 3.7. \end{proof} 

Conservation of charge is analogous to the Gauss-Bonnet formula, where curvature and charge are equated: The sum of the charges is a constant multiple of the Euler characteristic. Toric surfaces, which have charge zero, are ``flat" in some sense. This analogy will take a precise form in the following section; we show that the dual complex $\Gamma(\mathcal{X}_0)$ of a Type III anticanonical pair $\mathcal{X}_0$ admits a natural integral-affine structure with singularities at the vertices corresponding anticanonical pairs $(V_i,D_i)$ of positive charge. Imposing a singular, integral-affine ``fan" structure on the dual complex of a maximally unipotent degeneration of a Calabi-Yau manifold plays a role in the Gross-Siebert program \cite{GS03} for proving the SYZ conjecture. For instance, the case of a Type III degeneration of K3 surfaces is specifically discussed in \cite{GHK11}.

\section{Integral-Affine Surfaces and Type III Anticanonical Pairs} 

Before defining the integral-affine structure on $\Gamma(\mathcal{X}_0)$, we give some general definitions and propositions regarding integral-affine surfaces. A {\it basis triangle} of $\R^2$ is a triangle of area $\frac{1}{2}$ with integral vertices. The edges of a basis triangle pairwise form a lattice basis. 

\begin{definition} A {\it triangulated integral-affine surface with singularities} is a triangulated real surface $S$, possibly with boundary, such that (1) the complement of the vertices $\{v_i\}\subset S$ of the triangulation admits an atlas of charts into $\R^2$ with transition functions valued in the integral-affine transformation group $SL_2(\Z)\ltimes \Z^2$ and (2) the interior of every triangle admits a chart to a basis triangle. \end{definition}

Note that all labeled basis triangles are equivalent, up to a unique integral-affine transformation. An integral-affine surface with singularities has a canonical orientation induced from the standard orientation on $\R^2$. Let $e_{ij}$ denote a directed edge of the triangulation of $S$ going from $v_i$ to $v_j$ and let $f_{ijk}$ denote a triangle whose counterclockwise-ordered vertices are $v_i$, $v_j$, and $v_k$. Within an integral-affine chart containing the interior of the edge $e_{ij}$, we may view $e_{ij}$ as the vector $v_j-v_i$.

\begin{remark} Because $S$ is triangulated into basis triangles, the boundary $P$ is polygonal: There is a decomposition of the boundary $\partial S=P_1+\dots+P_n$ such that each $P_i$ is integral-affine equivalent to a line segment between two lattice points. By convention, we assume that the boundary components $P_i$ are maximal: The union of two distinct boundary components is never integral-affine equivalent to a single line segment between two lattice points. \end{remark}

If the atlas of integral-affine charts on $S\,\!-\{v_i\}$ extends to all vertices $v_i$, then we say that $S$ is {\it non-singular}. For the remainder of the paper, we will implicitly assume that an integral-affine surface has singularities and we will specify when an integral-affine surface is non-singular.

\begin{definition} Let $S_{sing}$ denote the vertices of the triangulation of $S$ to which the integral-affine structure fails to extend.\end{definition}

\begin{remark}\label{mani} Let $S$ be an integral-affine surface with singularities. A small contractible open subset $U\subset S-S_{sing}$ has a chart $\phi_U\,:U \rightarrow\R^2$ which is uniquely defined up to integral-affine transformation. We can then construct a {\it developing map} $$\phi\,:\,S\widetilde{-S}_{sing}\rightarrow \R^2$$ from the universal cover of $S-S_{sing}$ to $\R^2$ by gluing together local charts $\phi_U$ and $\phi_V$ that agree on $U\cap V$. The map $\phi$ is uniquely determined up to post-composition with an element of $SL_2(\Z)\ltimes \Z^2$. The developing map is equivalent to the data of the monodromy representation $$M\,:\,\pi_1(S-S_{sing})\rightarrow SL_2(\Z)\ltimes \Z^2$$ constructed from the parallel transport of the integral-affine structure along a loop. We also make use of the less refined monodromy map $N\,:\,\pi_1(S-S_{sing})\rightarrow SL_2(\Z)$ which projects onto the $SL_2(\Z)$ part of the monodromy. \end{remark}

\begin{definition}\label{si} Let $e_{ik}$ be a directed edge in the interior of a triangulated integral-affine surface. Let $e_{ij}$ and $e_{i\ell}$ be the edges emanating from $v_i$ directly clockwise and counterclockwise to $e_{ik}$. We define the {\it negative self-intersection} $d_{ik}$ of the edge $e_{ik}$ by the formula $$d_{ik}e_{ik}=e_{ij}+e_{i\ell},$$ where we view the edges as lattice vectors in some chart. Note that $d_{ik}$ is an integer because $(e_{ij},e_{ik})$ and $(e_{ik},e_{i\ell})$ are both oriented lattice bases and $d_{ik}$ is independent of the choice of integral-affine chart.  \end{definition}

\begin{proposition}\label{tpf} Let $S$ be a triangulated integral-affine surface with singularities. The formula $d_{ik}+d_{ki}=2$ holds for all interior edges $e_{ik}$.\end{proposition}

\begin{proof} By working within an integral-affine chart, the formula reduces to the following equivalence: $$d_{ik}(v_k-v_i)= (v_j-v_i)+(v_{\ell}-v_i)\!\!\iff \!\!(2-d_{ik})(v_i-v_k)= (v_j-v_k)+(v_{\ell}-v_k).$$ \end{proof}

We now show that the negative self-intersections of the edges uniquely determine the integral-affine structure on $S$:

\begin{proposition}\label{wd} A triangulated integral-affine surface $S$ is uniquely determined by the data of a collection of negative self-intersections $d_{ik}$ for each directed interior edge $e_{ik}$ such that $d_{ik}+d_{ki}=2$. \end{proposition}

\begin{proof} We construct a unique integral-affine surface $S$ from the collection of integers $d_{ik}$. We must declare each triangle $f_{ijk}$ of $S$ to be integral-affine equivalent to a basis triangle. We now show that the integral-affine structure extends to the interior of an edge in a unique manner such that the negative self-intersection of $e_{ik}$ is $d_{ik}$. Given a chart for $f_{ijk}$ there is a unique way to glue to it a basis triangle $f_{ik\ell}$ sharing the edge $e_{ik}$ which satisfies $d_{ik}e_{ik}=e_{ij}+e_{i\ell}$---this equation specifies $e_{i\ell}=d_{ik}e_{ik}-e_{ij}$. See Figure \ref{fig1}, for example. Note that $e_{ik}$ and $e_{i\ell}$ form an oriented lattice basis, so the triangle $f_{ik\ell}$ formed by these vectors is a basis triangle. By Proposition \ref{tpf}, gluing the triangles $f_{k\ell i}$ and $f_{kij}$ along the directed edge $e_{ki}$ using the integer $d_{ki}$ results in the same integral-affine structure on the quadrilateral $f_{ijk}\cup f_{ik\ell}$. Thus, we have defined an integral-affine structure on $S-\{v_i\}$. \end{proof}

\begin{figure}
\begin{centering}
\includegraphics[width = 2.75in]{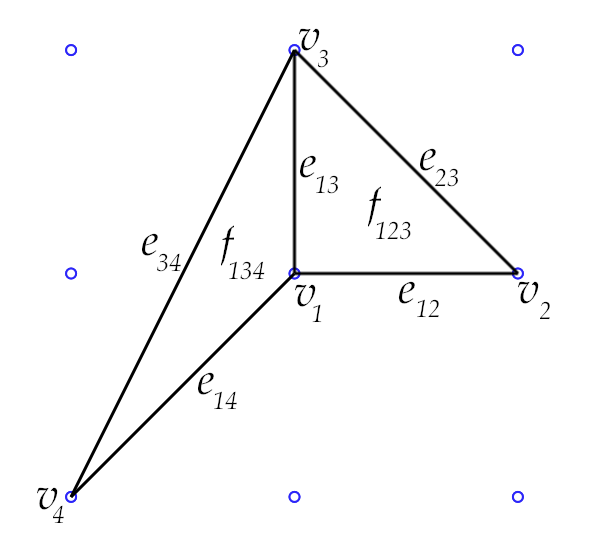} \caption{The integral-affine structure on the union of $f_{123}$ and $f_{134}$ if $d_{13}=-1$.} \label{fig1}
\end{centering}
\end{figure}

We now determine when the non-singular integral-affine structure on $S-\{v_i\}$ extends to an interior vertex $v_i$. But first, we require the following definition:

\begin{definition}\label{pseudofan} Let $(V,D)$ be an anticanonical pair with cycle components $D=D_1+\dots+D_n.$ The {\it pseudo-fan} of $(V,D)$ is a triangulated integral-affine surface whose underlying triangulated surface is the cone over the dual complex of $D$. By Proposition \ref{wd}, it suffices to declare that the negative-self intersection of the directed edge $e_i$ pointing from the cone point to the vertex corresponding to a component $D_i$ is $$d_i =\twopartdef{-D_i^2}{n>1}{2-D_i^2}{n=1}.$$
The imposed integral-affine structure has at most one singularity, at the cone point. Compare to Section 0.3.1 and Lemma 1.3 of \cite{GHK11}. \end{definition}


\begin{proposition}\label{ext} The integral-affine structure on the pseudo-fan of $(V,D)$ extends to the cone point if and only if $(V,D)$ is toric. \end{proposition}

\begin{proof} First suppose that $(V,D)$ is a smooth, complete toric surface. Then, the one-dimensional rays of the fan $\mathfrak{F}$ are spanned by primitive integral vectors $e_i$ such that $(e_i,e_{i+1})$ is an oriented lattice basis. By Section 2.5 of \cite{Ful93}, we have the equation $$-D_i^2e_i=e_{i-1}+e_{i+1}.$$ Thus, the polygon whose vertices are the endpoints of the vectors $e_i$ forms a single chart for the pseudo-fan of $(V,D)$. This integral-affine structure visibly extends over the origin. See Figure \ref{fig2}.

Conversely, if the integral-affine structure on the pseudo-fan of $(V,D)$ extends to the cone point, then we may take a chart around the cone point centered at the origin. The directed edges emanating from the cone point generate the one-dimensional rays of the fan of some toric surface. This toric surface is necessarily isomorphic to $(V,D)$ because the integers $-D_i^2$ determine the fan of $(V,D)$ up to $SL_2(\Z)$ equivalence. Note that $(V,D)$ is automatically toric because $Q(V,D)=0$.\end{proof}

\begin{figure}

\includegraphics[width = 2.75in]{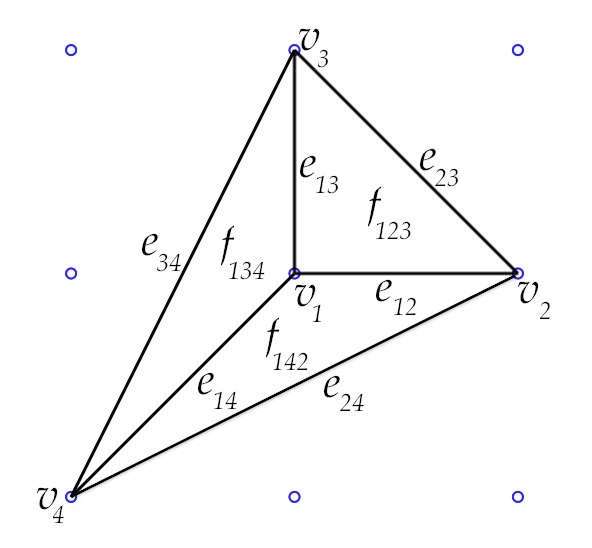} \caption{The integral-affine structure on the pseudo-fan of the toric pair $(\mathbb{P}^2,\triangle)$ where $\triangle$ is a union of three lines in $\mathbb{P}^2$ forming a triangle.}\label{fig2}

\end{figure}

Consider a Type III anticanonical pair $\mathcal{X}_0$. The dual complex $\Gamma(\mathcal{X}_0)$ is a triangulation of $S^2$ whose vertices $v_i$ correspond to components $V_i$, whose directed edges $e_{ij}$ correspond to double curves $D_{ij}$, and whose triangular faces $f_{ijk}$ correspond to triple points $T_{ijk}$. 

\begin{proposition} Let $\mathcal{X}_0$ be a Type III anticanonical pair. The dual complex $\Gamma(\mathcal{X}_0)$ has a triangulated integral-affine structure such that the edge $e_{ij}$ has negative self-intersection $$d_{ij}:=\twopartdef{-D_{ij}^2}{\ell(D_i)\geq 2}{2-D_{ij}^2}{\ell(D_i)=1.}$$ Furthermore, this integral-affine structure extends maximally to $$\Gamma(\mathcal{X}_0)- \{v_i\,:\,Q(V_i,D_i)>0\textrm{ or }\,i=0\}.$$ \end{proposition}

\begin{proof} The triple point formula states that $d_{ij}+d_{ji}=2$, and so by Proposition \ref{wd}, we have the first statement. Proposition \ref{ext} plus the fact that $Q(V_i,D_i)=0$ if and only if a pair $(V_i,D_i)$ is toric imply that the integral-affine structure fails to extend to $v_i$ with $Q(V_i,D_i)>0$. While it is plausible that $Q(V_0,D_0)=0$, the integral-affine structure does not extend to the vertex $v_0$ corresponding to $V_0$---if it did, ${\bf d'}$ would be the negative self-intersection sequence of the boundary components of some toric surface. But $D_0=D'$ is negative-definite, whereas on a toric surface, the boundary components span the Picard group, which has indefinite intersection form. \end{proof}

\begin{definition} Let $v$ be a vertex of a triangulated integral-affine surface. Denote by $star(v)$ the union of the triangles containing $v$. \end{definition}

It is automatic from the definitions that for $\Gamma(\mathcal{X}_0)$ we have an equality between the pseudo-fan of $(V_i,D_i)$ and $star(v_i)$ for all $i$. For convenience, we extend Definition \ref{pseudofan} to allow the pair $(V_0,D_0)$, so that we may also discuss the pseudo-fan of the hyperbolic Inoue surface.

We now record the effect of node smoothings and internal blow-ups on the pseudo-fan:

\begin{proposition}\label{node} Let $(\tilde{V},\tilde{D})$ be a deformation of an anticanonical pair $(V,D)$ such that $\tilde{D}$ is the smoothing of the node $D_{i-1}\cap D_i$. The pseudo-fan of $(\tilde{V},\tilde{D})$ is the result of the following surgery on the pseudo-fan of $(V,D)$: Collapse the triangular face with edges $e_i$ and $e_{i+1}$ to a single edge with negative self-intersection $d_i+d_{i+1}-2$. \end{proposition}

\begin{proposition}\label{blow} Let $(\tilde{V},\tilde{D})$ be an internal blow-up of an anticanonical pair $(V,D)$ on the component $D_i$. The pseudo-fan of $(\tilde{V},\tilde{D})$ is the result of the following surgery on the pseudo-fan of $(V,D)$: Keep the integral-affine structure fixed away from the edge $e_i$ and increase the negative self-intersection of $e_i$ from $d_i$ to $d_{i+1}$. \end{proposition}

\begin{proof} Both Propositions \ref{node} and \ref{blow} follow immediately from Remark \ref{changes}. \end{proof}

If one begins with a toric surface $(V,D)$, the $SL_2(\Z)$ component of monodromy around the cone point after either surgery is conjugate to $$\twobytwo{1}{1}{0}{1}.$$ In particular, monodromy of the pseudo-fan of an internal blow-up of a toric surface is a shear along the edge $e_i$ corresponding to the component receiving the blow-up.

\section{Surgeries on Integral-Affine Surfaces}

We next describe surgeries on integral-affine surfaces that prove useful in the next section when constructing a Type III anticanonical pair $\mathcal{X}_0$. These surgeries are motivated by work of Symington \cite{Sym03} on almost toric fibrations, with further details in Remark \ref{rem}. For the remainder of this section, let $S$ denote a singular integral-affine surface which is homeomorphic to a disc, so that the polygonal boundary $\partial S$ is a circle. That is, the boundary $$\partial S=P_1+\dots+P_n$$ is the union of a sequence of segments $P_i$ put end-to-end, with each segment integral-affine equivalent to a straight line segment between two lattice points. We index the boundary components $P_i$ such that they go counterclockwise around $S$ as $i$ increases. Let $$v_{i,i+1}:=P_i\cap P_{i+1}$$ denote a vertex of $\partial S$ and let $x_i$ and $y_i$ denote the primitive integral vectors emanating from $v_{i,i+1}$ along $P_{i+1}$ and $P_i$, respectively. Thus, $y_{i+1}=-x_i$ in a local chart on $S$ containing the edge $P_i$. We further assume that $(x_i,y_i)$ is an oriented lattice basis. Consequently, the interior angles at the vertices of $P$ are less than $\pi$ in any integral-affine chart.

\begin{remark}\label{rem} Let $(X,\omega)$ by a symplectic, toric surface---that is, a compact symplectic $4$-manifold equipped with a Hamilton two-torus action. If we think of $X$ as a complex toric surface, we can view the Hamiltonian two-torus action as the action of $S^1\times S^1\subset \C^*\times \C^*$. Recall that there is a moment map $$\mu\,:\, (X,\omega)\rightarrow S$$ to a convex planar polygon $S$ (including its interior) such that the toric boundary components of $X$ map to the components of $\partial S$. The general fiber of $\mu$ is a Lagrangian torus, which degenerates on the edges of $S$ to a circle and on the vertices of $S$ to a point. When $[\omega]\in H^2(Y,\Z)$ is integral, the moment polygon can be taken to have integral vertices. Then $S$ satisfies the assumptions of this section---in particular, the vectors $x_i$ and $y_i$ form an oriented lattice basis.

Following Symington \cite{Sym03}, an {\it almost toric fibration} is a Lagrangian fibration $\mu\,:\, (X,\omega)\rightarrow S$ whose general fiber is a smooth $2$-torus, which undergoes symplectic reduction over the boundary $\partial S$, but whose interior fibers may also degenerate to necklaces of spheres at some finite set of points. The {\it almost toric base} $S$ is a generalization of the moment polygon, and has a natural integral affine-linear structure, with $v\in S_{sing}$ whenever the fiber $\mu^{-1}(v)$ is singular. The inverse image of $\partial S$ is an anticanonical divisor of $X$ in the sense of symplectic geometry.

If $S$ is homeomorphic to a disc, then an almost toric fibration over $S$ is a symplectic anticanonical pair $$(Y,D,\omega)\rightarrow S$$ which sends the components of $D$ to the components of $\partial S$. Symington defines two surgeries on $S$: An internal blow-up and a node smoothing (in the terminology of \cite{Sym03}, an almost toric blow-up and a nodal trade, respectively). An internal blow-up of $S$ on the boundary component $P_i$ is the base of an almost toric fibration of an internal blow-up of $(Y,D,\omega)$ on the component $D_i$, and an analogous statement holds for a node smoothing.


\end{remark}

\begin{definition}\label{selfie} Choose a chart of $S$ containing a neighborhood of the edge $P_i$. We define the {\it negative self-intersection} $d_i$ of the boundary component $P_i$ by the formula \begin{align*} d_iy_i&=y_{i-1}-x_i \\ &(=y_{i-1}+y_{i+1}).\end{align*} \end{definition}

If $(Y,D,\omega)\rightarrow S$ is an almost toric fibration, then the negative self-intersection of the edge $P_i$ is equal to the negative self-intersection $-D_i^2$ of the component of $D$ fibering over $P_i$.

We do not define $d_i$ by the formula $d_iy_i=y_{i-1}+y_{i+1}$ because $y_{i+1}$ is not a vector based at a point on $P_i$ and thus, may not be defined in our chosen neighborhood of $P_i$. By extending our chart to a neighborhood of $P_i\cup P_{i+1}$, this definition would become valid. Note that Definition \ref{selfie} fails when $P$ has only one boundary component, as no neighborhood of the single edge is contractible. This problem is resolved by working in a chart on the universal cover of a neighborhood of the boundary component. 

\begin{definition}\label{inter} We define an {\it internal blow-up} of $S$ on the boundary component $P_i$. First, delete a triangle $T\subset S$ satisfying: 
\begin{enumerate}
\item One edge of $T$ is a proper subset of $P_i$. 
\item The remainder of $T$ lies in the interior of $S-S_{sing}$.  
\item $T$ is an integer multiple $n$ of a basis triangle.
\end{enumerate}

\noindent Let $v$ be the unique vertex of $T$ contained in the interior of $S$. Denote by $(e_1,e_2)$ the oriented lattice basis emanating from $v$ along the edges of $T$. See Figure \ref{fig3}. Glue the edge along $e_2$ of $S\,\!- T$ to the edge along $e_1$ of $S\,\!- T$ via the unique affine-linear map which fixes $v$, maps $e_2\mapsto e_1$, and preserves the line containing $P_i$. The resulting integral-affine surface is an {\it internal blow-up} of $S$ on the boundary component $P_i$. Its singular set is $S_{sing}\cup \{v\}$. Call $n$ the {\it size} of the surgery.
\end{definition}

The gluing map is a shear transformation along the line through $v$ parallel to $e_2-e_1$. An internal blow-up does not change the number of boundary components because the lefthand and righthand pieces of $P_i$ are glued into a single line segment. After the surgery, $(x_j,y_j)$ is still an oriented lattice basis for all $j$ because the internal blow-up does not alter the integral-affine structure in the neighborhood of a vertex. 

\begin{figure}
\centering

\includegraphics[width = 2.4in]{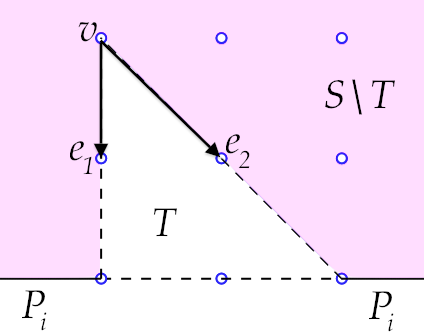}\caption{An internal blow-up on $P_i$ of size $2$.}%
\label{fig3}
\end{figure}

\begin{figure}
\includegraphics[width = 2.2in]{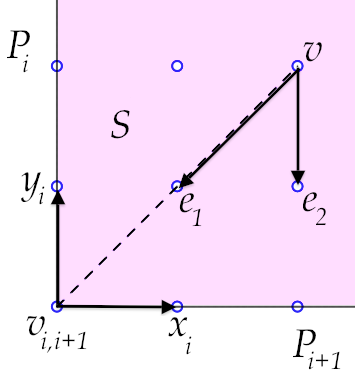}\caption{A node smoothing of $v_{i,i+1}$ of size $2$.}%
\label{fig4}
\end{figure}

\begin{definition}\label{smoo} We define a {\it node smoothing} of $S$ at $P_i\cap P_{i+1}$. For some $n\in \N$, make a cut along the segment from $v_{i,i+1}$ to a point $$v:=v_{i,i+1}+n(x_i+y_i)$$ lying in $S - \partial S$. See Figure \ref{fig4}. Glue the clockwise edge of the cut (from the perspective of $v$) to the counterclockwise edge of the cut by the shearing map which point-wise fixes the line containing the cut and maps $x_i$ to $-y_i$. The resulting integral-affine surface is a {\it node smoothing} of $S$ at $P_i\cap P_{i+1}$. Its singular set is $A\cup \{v\}$. Call $n$ the {\it size} of the surgery.

\end{definition}

Note that even though the gluing fixes the cut point-wise, it alters the integral-affine structure along the cut. Let $e_1:=-x_i-y_i$ be the primitive integral vector emanating from $v$ along the cut, and let $e_2$ be any vector such that $(e_1,e_2)$ is an oriented lattice basis. Then, in the $(e_1,e_2)$ basis, the gluing map is $$\begin{pmatrix} 1 & 1 \\ 0 & 1 \end{pmatrix}.$$ The gluing is independent of the choice of $e_2$ because it is a shear fixing $e_1$. The boundary of a node smoothing of $S$ has one fewer edge than $S$: After a node smoothing, the edges $P_i$ and $P_{i+1}$ are straightened into a single edge because the image of $x_i$ under the gluing map is $-y_i$. As in the case of the internal blow-up, $(x_j,y_j)$ is still an oriented lattice basis for all $j\neq i$ after the surgery because the node smoothing does not alter the integral-affine structure in the neighborhood of a vertex $v_{j,j+1}$ such that $j\neq i$. The vertex $v_{i,i+1}$ ceases to exist after the surgery. 

\begin{proposition}\label{same} An internal blow-up of $S$ on the boundary component $P_i$ transforms the negative self-intersections of the boundary components as follows: $$(\dots, d_i,\dots)\mapsto (\dots,d_i+1,\dots)$$ while smoothing the node $P_i\cap P_{i+1}$ of $S$ transforms the negative self-intersections of the boundary components as follows: $$(\dots,d_i,d_{i+1},\dots)\mapsto (\dots,d_i+d_{i+1}-2,\dots).$$ \end{proposition}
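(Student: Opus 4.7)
My plan is to reduce each assertion to a short local computation in an integral-affine chart. Both surgeries are local: an internal blow-up modifies $(S,P,A)$ only in a contractible neighborhood of $P_i$, while a node smoothing modifies it only in a contractible neighborhood of the vertex $v_{i,i+1}$. Consequently, for every boundary index $j$ outside the affected region the chart in which $d_j$ is computed is untouched, so I will immediately conclude $d_j' = d_j$. What remains is to compute the new self-intersection on the affected edge(s), by essentially the same argument used for pseudo-fans in Propositions \ref{blow} and \ref{node}.

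For the internal blow-up on $P_i$ I will fix a chart containing a neighborhood of $P_i$, place $P_i$ on the $x$-axis, and write $y_i = (-1,0)$, $y_{i-1} = (a,1)$, $x_i = (c,1)$, so that $d_i = c-a$ by Definition \ref{selfie}. The requirements that the gluing map $g$ fix $v$, send $e_2 \mapsto e_1$, preserve the line containing $P_i$, and lie in $SL_2(\Z)$ force its linear part, in the $(e_1,e_2)$-basis, to be $\bigl(\begin{smallmatrix} 2 & 1 \\ -1 & 0\end{smallmatrix}\bigr)$, independent of the size of $T$; in the chart above this conjugates to the horizontal-preserving shear $\bigl(\begin{smallmatrix} 1 & 1 \\ 0 & 1\end{smallmatrix}\bigr)$. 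Parallel transport along $P'_i$ from $v_{i,i+1}$ back to $v_{i-1,i}$ picks up exactly this shear, so $y_{i-1}$ and $y_i$ remain as before while $x_i$ becomes $x_i + y_i$ in the chart at $v_{i-1,i}$. Substituting into $d'_i y'_i = y'_{i-1} - x'_i$ then yields $d'_i = d_i + 1$.

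For the node smoothing at $v_{i,i+1}$ I will work in a chart containing a neighborhood of $P_i \cup P_{i+1}$. The gluing $h$ is determined uniquely by the two conditions that it point-wise fix the cut line through $v_{i,i+1}$ in direction $x_i + y_i$ and satisfy $h_\ast x_i = -y_i$; the latter is precisely what straightens $P_i$ and $P_{i+1}$ into the single segment $P'_i$ on the $x$-axis in the resulting chart. I will use the old relation $d_{i+1}y_{i+1} = y_i - x_{i+1}$ to express $x_{i+1}$ in terms of $d_{i+1}$, apply $h_\ast$ to transport it into the chart adapted to $P'_i$, and read off the coefficient of the new $y'_i$ in the formula $d'_i y'_i = y'_{i-1} - x'_i$. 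A short substitution will then collapse to $d'_i = d_i + d_{i+1} - 2$.

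The main obstacle will be bookkeeping: carefully distinguishing the old and new charts and tracking how each relevant tangent vector either stays fixed or is pushed through the monodromy of the gluing. Once the unique linear parts of $g$ and $h$ have been pinned down --- both elementary integral shears --- the rest is a one-line computation, and the answers will match the pseudo-fan formulas of Propositions \ref{blow} and \ref{node} under the interior/boundary duality between the two settings.
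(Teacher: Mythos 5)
Your overall strategy is exactly the computation the paper declines to write out: its ``proof'' of Proposition~\ref{same} is an explicit omission, deferring to ``straightforward computations involving the gluing matrices and the vectors $x_i$ and $y_i$.'' Your locality argument for the unaffected edges is fine, and the node-smoothing half of your plan is correct: the shear $h$ is indeed pinned down by $h(x_i+y_i)=x_i+y_i$ and $h(x_i)=-y_i$, and transporting $x_{i+1}=y_i+d_{i+1}x_i$ across the cut into the chart adapted to the straightened edge gives $x'=x_i+(2-d_{i+1})y_i$, whence $d'y_i=y_{i-1}-x'=(d_i+d_{i+1}-2)y_i$.

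The internal blow-up computation, however, contains a sign error that, followed literally, proves the wrong formula. You correctly identify the linear part of the gluing as $\bigl(\begin{smallmatrix}2&1\\-1&0\end{smallmatrix}\bigr)$ in the $(e_1,e_2)$-basis and note that in your chart (with $P_i$ on the $x$-axis and the interior of $S$ above it) this becomes the horizontal shear $L=\bigl(\begin{smallmatrix}1&1\\0&1\end{smallmatrix}\bigr)$. But then $Lx_i=L(c,1)=(c+1,1)=x_i-y_i$, not $x_i+y_i$ (recall your $y_i=(-1,0)$). With your stated transport $x_i\mapsto x_i+y_i$, the substitution $d_i'y_i=y_{i-1}-(x_i+y_i)$ yields $d_i'=d_i-1$, contradicting the conclusion you assert. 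The correct bookkeeping: since the gluing map carries the $e_2$-edge (which bounds the piece containing $v_{i,i+1}$) onto the $e_1$-edge (bounding the piece containing $v_{i-1,i}$), the glued chart that restricts to the identity near $v_{i-1,i}$ applies $L$ to vectors on the $v_{i,i+1}$ side; thus $y_{i-1}$ and $y_i$ are unchanged while $x_i'=Lx_i=x_i-y_i$, giving $d_i'y_i=y_{i-1}-x_i+y_i=(d_i+1)y_i$. (Equivalently, in the chart adapted to the other side one keeps $x_i$ and replaces $y_{i-1}$ by $L^{-1}y_{i-1}=y_{i-1}+y_i$.) Since the entire content of this proposition is a sign-sensitive two-line computation, this link in the chain must be repaired before the argument is complete.
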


\begin{proof} We omit the proof of the proposition. It follows from straightforward computations involving the gluing matrices and the vectors $x_i$ and $y_i$. Also, it is implicitly proven in \cite{Sym03}, because the negative self-intersection of $P_i$ from Definition \ref{selfie} is equal to the negative self-intersection $d_i$ of the component $D_i$ which maps to $P_i$ under an almost toric fibration.\end{proof}

We now describe how to complete the disc $S$ to a sphere when the boundary is negative, in the appropriate sense:

\begin{proposition}\label{complete} Let $S$ be an integral-affine disc such that adjacent edges of $\partial S$ meet to form lattice bases and the negative self-intersections of components $P_i\subset \partial S$ satisfy \begin{align*} &d_i\geq 2\,\,\,\,\,\,\,\,\,\,\textrm{ for all }i, \\ &d_i\geq 3\,\,\,\,\,\,\,\,\,\,\textrm{ for some }i.\end{align*} Then there is an natural embedding $S\hookrightarrow \hat{S}$ into an integral-affine sphere such that $\hat{S}_{sing}=S_{sing}\cup \{v_0\}$ for a distinguished point $v_0\in \hat{S}-S$.  \end{proposition} 

\begin{proof} Let $U\supset \partial S$ be a collar neighborhood of the boundary of $S$ containing no singular points. Note that $\pi_1(U)=\Z$. Consider the developing map $$\phi\,:\,\tilde{U}\rightarrow \R^2$$ from the universal cover of $U$ to $\R^2$. The universal cover $\widetilde{\partial S}$ of the boundary maps to an infinite lattice polygon in $\R^2$. Each edge $P_i$ is integral-affine equivalent to some interval $[0,m_i]$ on the $x$-axis, for a unique $m_i\in\N$. Then $\phi(\widetilde{\partial S})$ is an infinite sequence of vectors $\{m_iz_i\}_{i\in\Z}$ put end-to-end, such that $(z_{i+1},-z_i)$ is an oriented lattice basis, and $$d_iz_i=z_{i-1}+z_{i+1}$$ for all $i$ (the indices of $m_i$ and $d_i$ are taken mod $n$). We call $\phi(\widetilde{\partial S})$ a {\it discrete hyperbola}. The interior angles of the discrete hyperbola are less than $\pi$ because $(z_{i+1},-z_i)$ is a lattice basis for all $i\in \Z$. One possible image of $\tilde{U}$ under the developing map is shown in Figure \ref{fig7} with the lower edge forming the discrete hyperbola.

\begin{figure}

\includegraphics[width = 2.9in]{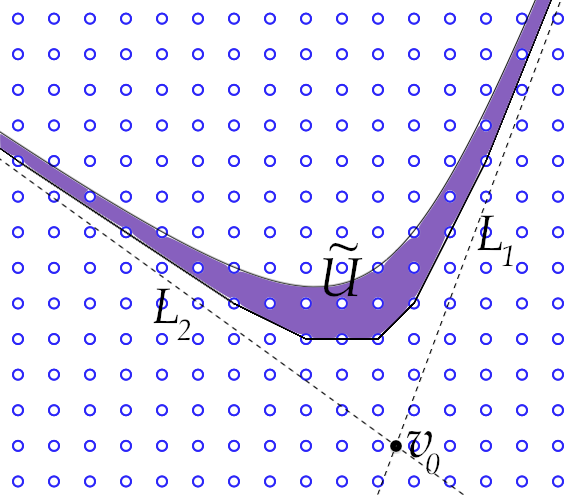}\caption{The image of the developing map of a collar neighborhood of the boundary of $S$.}\label{fig7}

\end{figure}

We claim that the discrete hyperbola has two asymptotic lines $L_1$ and $L_2$ which are the invariant lines of the monodromy transformation $M:=M(\gamma)$ associated to a counterclockwise loop $\gamma$ around the boundary of $S$. The change-of-basis from $(z_i,-z_{i-1})$ to $(z_{i+1},-z_i)$ is $$\begin{pmatrix} d_i & -1 \\ 1 & 0\end{pmatrix}.$$ Therefore, the $SL_2(\Z)$ part of the monodromy $N:=N(\gamma)$ is conjugate in $SL_2(\Z)$ to the product $$\prod_{i=1}^n \begin{pmatrix} d_i & -1 \\ 1 & 0\end{pmatrix}$$ because the counterclockwise monodromy is conjugate to the change-of-basis from $(z_1,-z_0)$ to $(z_{n+1},-z_n)$. By choosing a basis properly, we may assume that $N$ is equal to the above product. The full $SL_2(\Z)\ltimes \Z^2$ monodromy transformation is then $$M\cdot v=N\cdot v+B$$ for some $B\in\Z^2$. Since $(d_1,\dots,d_n)$ is negative-definite, tr$\,N>2$ and therefore $N$ has two distinct, irrational positive eigenvalues. We solve the equation $v=N\cdot v +B$ to find the unique, rational fixed point $$v_0:=(I-N)^{-1}B$$ of $M$. Then $L_1$ and $L_2$ are the lines going through $v_0$ parallel to the eigenvectors of $N$.

The invariant line associated to the eigenvalue greater than one is stable, while the other invariant line is unstable. To prove that the discrete hyperbola is asymptotic to $L_1$ and $L_2$, we note that the monodromy transformation $M$ sends the discrete hyperbola to itself by mapping $$M\,:\,\phi(\tilde{P}_i)\mapsto \phi(\tilde{P}_{i+n}).$$ Thus, the edges $\phi(\tilde{P}_i)$ of the discrete hyperbola approach the stable and unstable invariant lines of $M$ as the index $i$ approaches positive and negative infinity, respectively. The discrete hyperbola bounds a convex region because its interior angles are less than $\pi$. Any line going though $v_0$ between $L_1$ and $L_2$ eventually intersects this region, because the discrete hyperbola approaches the eigenlines of $M$. Then, convexity implies that the complement of this convex region is star-shaped at $v_0$.

Let $R$ denote the region bounded by $L_1$, $L_2$, and the discrete hyperbola, with partial boundary $\phi(\widetilde{\partial S})$. Let $L$ be any line going through $v_0$ between $L_1$ and $L_2$ (for instance, we may assume $L$ is a line through $v_0$ and a vertex of the discrete hyperbola). Then the region bounded by $L$, $M\cdot L$, and $\phi(\widetilde{\partial S})$ is a fundamental domain for the action of $M$ on $R$, see Figure \ref{fig8}. The orbit space $$C:=\{M^n\,:n\in \Z\}\,\!\backslash R$$ inherits an integral-affine structure from $R$ non-singular away from $v_0$. Furthermore $C$ is cone over $\partial S$ such that $C_{sing}=\{v_0\}$ and the orientations on $C$ and $S$ induce opposite orientations on $\partial S$. Then, we define $\hat{S}$ to be the result of gluing $C$ and $S$ along their boundaries.

\begin{figure}
\centering
\includegraphics[width = 3in]{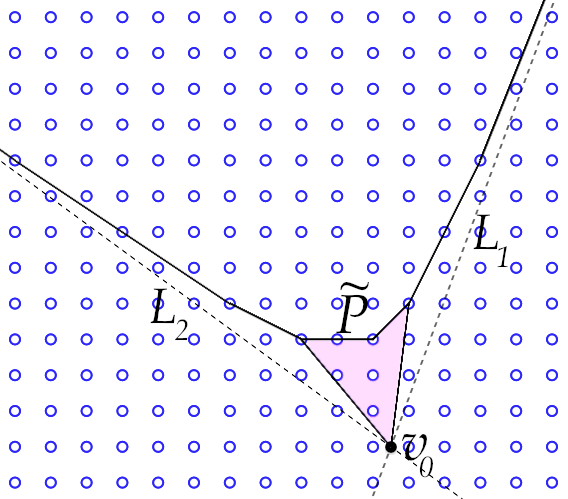}\caption{A fundamental domain for the action of $M$ on $R$.}
\label{fig8}
\end{figure}

More precisely, we can form a quotient $$\{M^n\,:n\in \Z\}\,\!\backslash (\phi(\tilde{U})\cup R)=U\cup C$$ and $U\cup C$ glues to $S$ along $U$ to produce an integral-affine sphere $\hat{S}$ such that $\hat{S}_{sing}=S\cup\{v_0\}$. \end{proof}

We note that $v_0$ may only have rational coordinates. Thus, even if $S$ admits a triangulation, $\hat{S}$ may not. Hence we define:

\begin{definition} Let $S$ be an integral-affine surface. The {\it order $k$ refinement} $S[k]$ is produced by post-composing the charts on $S$ with multiplication by $k$. \end{definition}

Note that $S[k]_{sing}$ and $S_{sing}$ are naturally identified. We remark without proof that the order $k$ refinement of $\Gamma(\mathcal{X}_0)$ corresponds to a SNC resolution of the order $k$ base change of $\mathcal{X}\rightarrow \Delta$.

\begin{remark}\label{refine} Let $S$ be as in Proposition \ref{complete}. Then the point $v_0$ in the definition of $\hat{S}$ may not be integral, but as it is rational, there exists some $k$ such that $v_0$ lies at an integral point of $\hat{S}[k]$. Then if $S$ can be triangulated into basis triangles, so can $\hat{S}[k]=S[k]\cup C[k]$, as $C[k]$ has a fundamental domain given by a lattice polygon, see Figure \ref{fig8}, and thus can easily be triangulated into basis triangles. \end{remark} 

\section{A Proof of Looijenga's Conjecture}

In this section, we present the construction of a Type III anticanonical pair $(\mathcal{X}_0,D)$ from an anticanonical pair $(Y,D)$, thus proving Looijenga's conjecture. But first, we need:

\begin{proposition}\label{model} Every anticanonical pair $(Y,D)$ can be expressed as a sequence of node smoothings and internal blow-ups starting with a toric pair $(\overline{Y},\overline{D})$.\end{proposition}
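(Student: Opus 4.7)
The plan is to induct on the charge $Q(Y,D) \geq 0$. The base case $Q=0$ is immediate because a pair of charge zero is toric (as noted just after the definition of charge), so the empty sequence of operations expresses $(Y,D)$ starting from itself. For the inductive step with $Q \geq 1$, it suffices to produce an anticanonical pair $(Y',D')$ of charge $Q-1$ from which $(Y,D)$ is recovered by a single internal blow-up or node smoothing; applying the inductive hypothesis to $(Y',D')$ and appending the recovering operation then expresses $(Y,D)$ starting from a toric pair.

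First I would consider the case where $Y$ carries a $(-1)$-curve $E$ with $E \not\subset D$. By adjunction, $E \cdot D = E \cdot (-K_Y) = 1$, so $E$ meets $D$ transversely at a single smooth point of $D$. Contracting $E$ via Castelnuovo's criterion produces a smooth anticanonical pair $(Y',D')$ whose charge is $Q-1$ by Remark \ref{changes}, and $(Y,D)$ is the internal blow-up of $(Y',D')$ at the image of $E$, closing the induction in this case.

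In the complementary case, every $(-1)$-curve of $Y$ is a component of $D$, and I claim that $(Y,D)$ admits a ``node un-smoothing'' deformation $(\mathcal{Y},\mathcal{D}) \to \Delta$ whose general fiber is isomorphic to $(Y,D)$ and whose special fiber $(Y',D')$ is an anticanonical pair with $\ell(D') = \ell(D)+1$, the extra component arising from specializing some $D_i \subset D$ to a reducible divisor $D_i'+D_i''$ meeting at a single new node. By Remark \ref{changes} this new pair has charge $Q-1$, and $(Y,D)$ is recovered by smoothing the newly-introduced node of $(Y',D')$. To construct the family, I would find a component $D_i$ moving in a linear system that contains such a reducible member, and lift the corresponding deformation of $D$ to a deformation of the pair using the surjectivity of $\mathrm{Def}(Y,D) \to \mathrm{Def}(D)$ underlying Proposition \ref{def} (Corollary 3.5 of \cite{fried}). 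A suitable $D_i$ is located by passing to the minimal model $Y \to Y_{\min} \in \{\mathbb{P}^2, \mathbb{F}_n\}$: under the hypothesis of this case, this map factors through corner blow-downs of components of $D$, and a short case analysis shows that any anticanonical cycle of positive charge on $\mathbb{P}^2$ or $\mathbb{F}_n$ contains a component of non-negative self-intersection, whose preimage on $Y$ provides the required $D_i$.

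The hard part will be this second case: exhibiting a sufficiently flexible component $D_i$ and promoting its specialization to a deformation of the pair $(Y,D)$ within the class of anticanonical pairs. Once this existence claim is granted, the induction closes with only routine invocations of the adjunction formula, Remark \ref{changes}, and the inductive hypothesis.
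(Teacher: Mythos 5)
Your Case 1 (contracting an internal $(-1)$-curve) is sound and corresponds to the map $\alpha$ in the paper's proof, but Case 2 contains a genuine gap, and the mechanism you propose for closing it would fail. The paper's proof factors the blow-down to a minimal pair as $(Y,D)\xrightarrow{\alpha}(Y_1,D_1)\xrightarrow{\beta}(Y_0,D_0)$ with $\alpha$ purely internal and $\beta$ purely corner blow-ups (Remark 2.6 of \cite{fried}), observes by direct classification (Lemma 3.2 of \cite{fm}) that every \emph{minimal} anticanonical pair is a node smoothing of a minimal toric pair, and then absorbs the corner blow-ups into the toric model, since a corner blow-up of a toric pair is again toric. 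Thus only one node ``un-smoothing'' is ever needed, and only for minimal pairs, where everything is explicit. You instead must produce a node un-smoothing at an arbitrary stage of your induction, namely whenever $Q(Y,D)\geq 1$ and every $(-1)$-curve lies in $D$, and this is precisely where your argument does not close.

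Concretely, two steps in your Case 2 break down. First, the claim that $Y\rightarrow Y_{\min}$ factors through corner blow-downs of components of $D$ is only justified for the first contraction: after contracting a $(-1)$-component of $D$, new $(-1)$-curves not contained in the image of $D$ can appear, so the hypothesis of Case 2 is not inherited. Second, and more seriously, locating a component of non-negative self-intersection on $\mathbb{P}^2$ or $\mathbb{F}_n$ does not provide a movable component on $Y$: the proper transform on $Y$ of such a component can have arbitrarily negative self-intersection (indeed, for the negative-definite pairs relevant to Looijenga's conjecture \emph{every} component of $D$ is rigid), so there is no reducible member of a linear system on $Y$ to specialize to. Nor does Corollary 3.5 of \cite{fried} help here: the surjectivity of $\mathrm{Def}(Y,D)\rightarrow\mathrm{Def}(D)$ lets you \emph{smooth} the nodes of a given degenerate pair, i.e.\ it produces the family once the special fiber $(Y',D')$ is already in hand; it does not produce a degeneration of a given pair $(Y,D)$ to one with an extra node. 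What your induction actually requires is the existence, for every non-toric pair with no internal exceptional curve, of an anticanonical pair realizing the un-smoothed cycle together with an identification of the smoothing's general fiber --- and this existence statement is exactly what the paper avoids having to prove in general by pushing the single node smoothing down to the classified minimal toric level and keeping the corner blow-ups on the toric side.
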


\begin{proof} First, we express $(Y,D)$ as a sequence of corner and internal blow-ups of a minimal anticanonical pair $(Y_0,D_0)$. We factor the blow-down to $(Y_0,D_0)$ into maps $\alpha$ and $\beta$ $$(Y,D)\xrightarrow{\alpha} (Y_1,D_1)\xrightarrow{\beta} (Y_0,D_0)$$ such that $\alpha$ consists only of interior blow-ups, while $\beta$ consists only of corner blow-ups, cf. Remark 2.6 of \cite{Fri15}. By direct examination (see Lemma 3.2 of \cite{FM83}), every minimal anticanonical pair $(Y_0,D_0)$ is a node smoothing of a minimal toric anticanonical pair, i.e. there is a family of anticanonical pairs with cycle $D_0$ that degenerates to a toric pair. Performing all the corner blow-ups of $\beta$ on this family expresses $(Y_1,D_1)$ as a node smoothing of a toric pair $(\overline{Y},\overline{D})$. Thus, $(Y,D)$ is the result of interior blow-ups and node smoothings on $(\overline{Y},\overline{D})$.\end{proof}

\begin{theorem}[Looijenga's Conjecture] If $D$ is the anticanonical divisor of some rational surface, then the cusp singularity associated to $D'$ is smoothable.\end{theorem}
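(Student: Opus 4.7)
The plan is to reduce to Friedman--Miranda (Theorem \ref{fm}) by explicitly constructing a Type III anticanonical pair $(\mathcal{X}_0, D)$ out of the given $(Y,D)$ using the integral--affine machinery developed in Sections 1--2. First I would invoke Proposition \ref{model} to write $(Y,D)$ as a finite sequence of internal blow-ups and node smoothings on a toric pair $(\overline{Y}, \overline{D})$. Take a moment polygon $(\overline{S}, \overline{P}, \emptyset)$ large enough (in the sense of having all $m_i$ big) so that all intended surgeries from Section 2 fit inside, and perform the corresponding internal blow-ups and node smoothings of Definitions \ref{inter} and \ref{smoo}. By Proposition \ref{same} the boundary components of the resulting $(S, P, A)$ have negative self-intersections $d_i$ matching those of $D$, so the charge count works out and we have an almost toric base for $(Y, D, \omega)$.

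Next, I would build the cone $C$ over $\partial S$. Consider a collar neighborhood $U \supset P$ of the boundary and the developing map $\phi : \widetilde{U} \to \R^2$; the universal cover $\widetilde{P}$ develops to a ``discrete hyperbola'' whose interior angles are all less than $\pi$. Because $D$ is negative-definite, the $SL_2(\Z)$-part $N$ of the counterclockwise monodromy has trace greater than $2$, so $M = N + B$ has a unique (rational) fixed point $v_0$ with two invariant lines $L_1, L_2$ forming the asymptotes of the hyperbola. The region $R$ between $L_1, L_2$ and the hyperbola is convex and $M$-invariant, and its quotient $(C, P^{\mathrm{op}}, \{v_0\})$ glues canonically onto $(S, P, A)$ along $P$ to produce the integral-affine sphere $(\hat S, \emptyset, A \cup \{v_0\})$. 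After passing to the order-$k$ refinement so that $v_0$ is integral, I would choose a triangulation of $\hat S[k]$ into basis triangles \emph{minimizing the number of edges at} $v_0$ (the minimality assumption will be crucial later).

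With the triangulation in hand, I would identify each $\mathrm{star}(v_i)$ with the pseudo-fan of an anticanonical pair. For a non-singular vertex this is automatic from the definition of the pseudo-fan (Proposition \ref{ext}). For a vertex $v_i \in A \setminus \{v_0\}$ introduced by a surgery on $(\overline{S}, \overline{P}, \emptyset)$, use Propositions \ref{node} and \ref{blow}: an internal blow-up on the moment polygon corresponds to a node-smoothing surgery on the local star, and vice versa, so $\mathrm{star}(v_i)$ is itself obtained from the (toric) pseudo-fan of some $(\overline{V}_i, \overline{D}_i)$ by the corresponding operation, hence is the pseudo-fan of a genuine anticanonical pair $(V_i, D_i)$. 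Gluing these surfaces $V_i$ along the shared double curves (identifying $D_{ij}$ with $D_{ji}$) gives a simple normal crossings surface $\mathcal{X}_0$ whose dual complex is the chosen triangulation of $\hat S[k]$; conditions (i)--(iv) of a Type III pair follow from Proposition \ref{tpf} and the fact that $\hat S$ is a sphere.

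The main obstacle, and the only step requiring real work, is analyzing $\mathrm{star}(v_0)$ and showing that it coincides with the pseudo-fan of the Hirzebruch--Inoue pair $(V_0, D')$. Here is how I would handle it. Let $(d_{01}, \dots, d_{0r})$ be the negative self-intersections of the edges emanating from $v_0$. Because the developing map of $\mathrm{star}(v_0)$ lands in $R$ and each $d_{0i} \le 0$ would force an interior angle $\ge \pi$, we get $d_{0i} \ge 1$; the case $d_{0i} = 1$ is excluded by a diagonal-flip argument against the minimality of the triangulation, and $d_{0i} = 2$ for all $i$ is excluded because it would make $L_1$ and $L_2$ parallel. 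Hence $(d_{01}, \dots, d_{0r})$ is negative-definite, so the developing image of $\mathrm{star}(v_0)$ is the region between $L_1, L_2$ minus the convex hull of the lattice points there---call this property $(\star)$, which characterizes $\mathrm{star}(v_0)$ up to integral-affine isomorphism once the eigendirections are fixed. On the other hand, the monodromy $N^{-1}$ around $v_0$ is (up to conjugacy in $SL_2(\Z)$) the product of the matrices $\left(\begin{smallmatrix} 0 & 1 \\ -1 & d_i \end{smallmatrix}\right)$ inverted, which by the classical duality of cusp cycles (cf.\ \cite{inoue}) is conjugate to the product of the analogous matrices for ${\bf d}'$. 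Thus the pseudo-fan of $(V_0, D')$ also satisfies $(\star)$ with the same invariant lines, and so the two triangulated integral-affine surfaces are isomorphic. Applying Theorem \ref{fm} to the Type III anticanonical pair $(\mathcal{X}_0, D)$ just constructed yields the smoothability of the cusp $D'$, completing the proof.
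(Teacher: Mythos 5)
Your proposal is correct and follows the paper's own argument essentially step for step: the reduction to Theorem \ref{fm} via Proposition \ref{model}, the surgeries on the moment polygon, the cone over the discrete hyperbola with the monodromy fixed point $v_0$, the minimal triangulation, and the characterization of $\mathrm{star}(v_0)$ by property $(\star)$ are all exactly the paper's Construction Parts 1--3 and its key Lemma. No substantive differences to report.
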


\begin{proof} Let $(Y,D)$ be an anticanonical pair. We express $(Y,D)$ as a sequence of node smoothings and internal blow-ups on a toric pair $(\overline{Y},\overline{D})$. From this data, we construct a Type III anticanonical pair $\mathcal{X}_0$: \vspace{3pt}

{\bf Construction:} Let $\overline{S}$ be a moment polygon for $(\overline{Y},\overline{D})$ with integral vertices. Then $\overline{S}$ is an integral-affine disc with no singularities, such that $(x_i,y_i)$ is an oriented lattice basis. The components $\overline{P}_i\subset \partial \overline{S}$ of the boundary have negative self-intersections $-\overline{D}_i^2$ for all $i$, in the sense of Definition \ref{selfie}. In fact we may simply choose a polygon $\overline{S}$ with this property, should we wish to avoid an appeal to symplectic geometry.

For each internal blow-up or node smoothing of $(\overline{Y},\overline{D})$ applied to produce $(Y,D)$, we perform an associated internal blow-up or node smoothing surgery on the integral-affine surface $\overline{S}$. Assume that all surgeries have size $1$, i.e. internal blow-ups remove a single basis triangle and node smoothings have cuts of minimal length, as in the example below. We are applying $Q(Y,D)$ surgeries of fixed size, but $\overline{S}$ may be chosen to be arbitrarily large (e.g. by scaling). Thus we can ensure, and in fact we require, that $\overline{S}$ can accommodate all the necessary surgeries. We denote the resulting integral-affine surface by $S$.

By Proposition \ref{same}, the negative self-intersections of the boundary components $P_i\subset \partial S$ are equal to the negative self-intersections $d_i$ of the components of $D$. By Remark \ref{rem}, $S$ is the base of an almost toric fibration of a symplectic rational surface $(Y,D,\omega)$. For example, Figure \ref{fig5} is a moment polygon $\overline{S}$ for the toric surface $$(\mathbb{P}^1\times\mathbb{P}^1,\, \overline{D}_1+\overline{D}_2+\overline{D}_3+\overline{D}_4).$$ Figure \ref{fig6} demonstrates 18 surgeries on $\overline{S}$ corresponding to four internal blow-ups on $\overline{D}_1$, two internal blow-ups on $\overline{D}_2$, six internal blow-ups on $\overline{D}_3$, five internal blow-ups on $\overline{D}_4$ and a node smoothing of $\overline{D}_1\cap \overline{D}_2$. After the surgeries, the integral-affine surface $S$ is the almost toric base of a negative-definite anticanonical pair $(Y,D,\omega)$ with ${\bf d}=(d_1,d_2,d_3)=(4,6,5)$. The charge of $(Y,D)$ is $18$, because each surgery increases the charge by $1$.

\begin{figure}
\includegraphics[width = 3in]{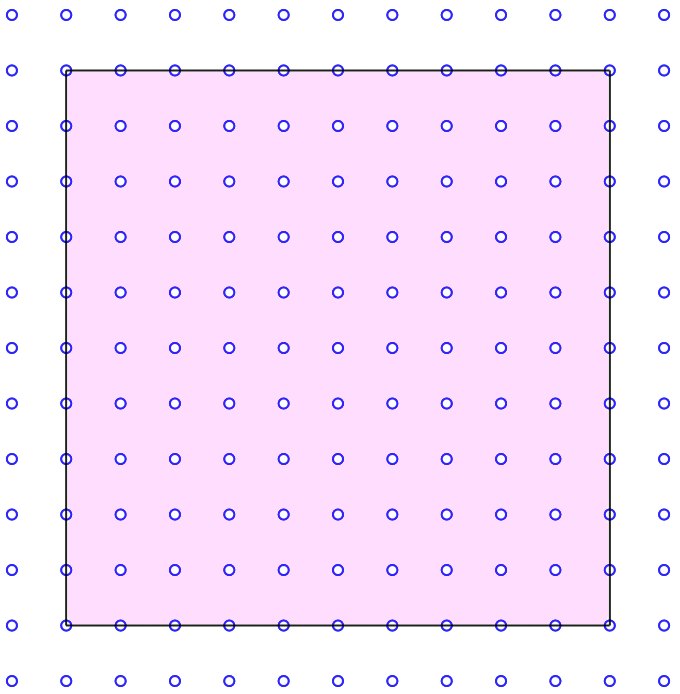}\caption{A moment polygon $\overline{S}$ for $\overline{Y}$.}
\label{fig5}
\end{figure}

\begin{figure}
\includegraphics[width = 3in]{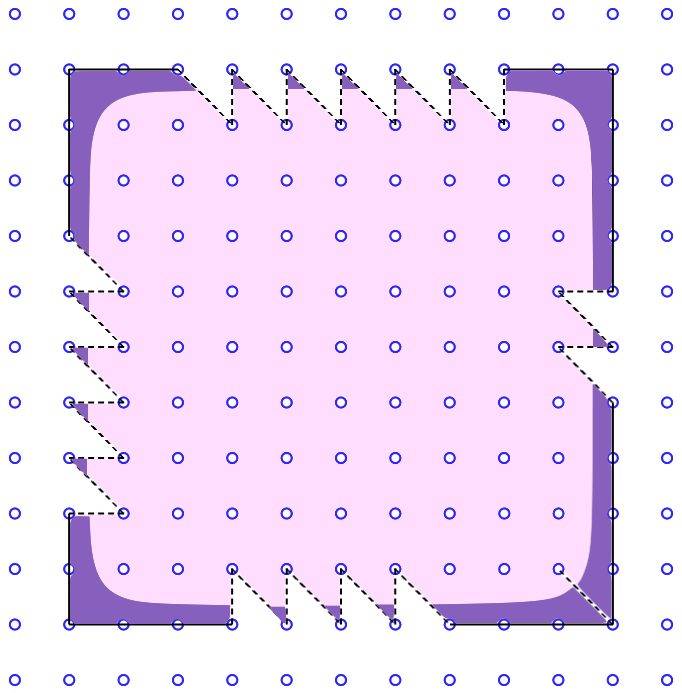} \caption{An almost toric base $S$ for $Y$.}
\label{fig6}
\end{figure}

The surgeries retain the property that $(x_i,y_i)$ is an oriented lattice basis, and hence $\partial S$ satisfies the conditions of Proposition \ref{complete}. Thus, we may complete $S$ to a sphere $\hat{S}=S\cup C$. Note that $S$ admits a triangulation into basis triangles, for instance, we may triangulate the polygonal fundamental domain shown in Figure \ref{fig6}. By Remark \ref{refine}, we may take an order $k$ refinement $\hat{S}[k]$ such that $\hat{S}[k]$ admits a triangulation into basis triangles. Now that we have established the existence of such a triangulation, we choose amongst all of them the one which attains the minimal possible number of edges emanating from $v_0$. 

Let $v_i$ be a vertex of the triangulation of $\hat{S}[k]$ which is non-singular. Then $star(v_i)$ is the pseudo-fan of some toric surface $(V_i,D_i)$. Now suppose that $v_i\in S[k]_{sing}$ is a singular point not equal to $v_0$. Each such singularity $v_i$ is introduced by a surgery on $\overline{S}$. Let $\overline{v}_i\in \overline{S}$ denote the pre-image of $v_i\in S$. In the case of an internal blow-up, one of the triangular faces of $star(\overline{v}_i)$ is collapsed by the surgery, whereas in the case of a node smoothing, the integral-affine structure along an edge of $star(\overline{v_i})$ is changed. In fact:

\begin{enumerate}
\item An internal blow-up on $\overline{S}$ corresponds to a node smoothing on $star(\overline{v}_i)$ by Proposition \ref{node} and Definition \ref{inter}. 
\item A node smoothing on $\overline{S}$ corresponds to an internal blow-up on $star(\overline{v}_i)$ by Proposition \ref{blow} and Definition \ref{smoo}.
\end{enumerate}

\noindent We conclude that there is an anticanonical pair $(V_i,D_i)$ whose pseudo-fan is $star(v_i)$ for all $i\neq 0$.

Finally, we consider $v_0$. The monodromy $N=N(\gamma)$ of a counterclockwise loop around the boundary $\partial S$ is equal to the monodromy of a {\it clockwise} loop around $v_0$. Thus, the monodromy of a counterclockwise loop around $v_0$ is $N^{-1}$.

\begin{lemma} The pseudo-fan of $(V_0,D')$ is isomorphic to $star(v_0)$. \end{lemma}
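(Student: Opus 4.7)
The plan is to identify the edges emanating from $v_0$ in the minimal triangulation of $\hat{S}[k]$ and compute their negative self-intersections in the sense of Definition \ref{si}, matching them to the cycle ${\bf d}'$ of the dual cusp.

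First I would argue that the minimality of the triangulation forces each edge from $v_0$ to be a primitive integer vector in a local chart: if an edge from $v_0$ were non-primitive, then subdividing at the intermediate lattice point produces a triangulation with strictly fewer triangles of $star(v_0)$ being non-basis, and re-triangulating these reduces the number of edges at $v_0$, contradicting minimality. Let $f_1, \dots, f_s$ be the resulting primitive vectors listed counterclockwise around $v_0$. Because each triangle of $star(v_0)$ has area $\tfrac{1}{2}$ and contains $v_0$ as a vertex, consecutive pairs $(f_j, f_{j+1})$ form oriented lattice bases. By Definition \ref{si}, each edge $f_j$ has a well-defined negative self-intersection $d_j''$ satisfying $d_j'' f_j = f_{j-1} + f_{j+1}$, where indices are read cyclically through the monodromy $N^{-1}$ around $v_0$. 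Hence $star(v_0)$ is abstractly the pseudo-fan of some cycle of length $s$ with negative self-intersections $(d_1'', \dots, d_s'')$.

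Next I would identify $(d_1'', \dots, d_s'')$ with the cycle ${\bf d}'=(b_1+3,\underbrace{2,\dots,2}_{a_1-3},\dots,b_k+3,\underbrace{2,\dots,2}_{a_k-3})$. Working in the fundamental domain of the $M$-action on $R$, I would trace how the primitive vectors $f_j$ from $v_0$ meet the vertices of the discrete hyperbola $\phi(\tilde{P})$. The recursion $d_i z_i = z_{i-1}+z_{i+1}$ for the hyperbola directions, together with the block decomposition ${\bf d}=(a_1,\underbrace{2,\dots,2}_{b_1},\dots)$, allows a block-by-block computation: each maximal run of $2$'s of length $b_i$ in ${\bf d}$ corresponds to a single vertex of the hyperbola seen from $v_0$ producing a value $d_j''=b_i+3$, while each $a_i\geq 3$ in ${\bf d}$ contributes $a_i-3$ consecutive basis triangles at $v_0$ for which $d_j''=2$. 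Summing these contributions cyclically reproduces ${\bf d}'$ exactly.

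The main obstacle is the second step: carefully tracking the change-of-basis matrices $\begin{pmatrix} 0 & 1 \\ -1 & d_i\end{pmatrix}$ applied along the hyperbola and showing that these conjugate, under the pairing between the hyperbola and $star(v_0)$, to the analogous matrices for the dual cycle ${\bf d}'$. This is essentially the integral-affine incarnation of the classical duality between a cusp singularity and its dual: the minimal triangulation of the cone $(C,P^{op},\{v_0\})$ at its cone point is combinatorially and integral-affinely dual to the discrete hyperbola bounding it. Once this duality is established, the triple point formula (Proposition \ref{tpf}) guarantees compatibility with the pseudo-fan structure, and the lemma follows.
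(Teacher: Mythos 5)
There is a genuine gap, and it has two parts. First, your proposal never establishes that the negative self-intersections $d_j''$ at $v_0$ satisfy $d_j''\geq 2$ for all $j$ with $d_j''\geq 3$ for some $j$, i.e.\ that the boundary of $star(v_0)$ is strictly convex and the cycle it carries is negative-definite. This is the step where the minimality of the number of edges at $v_0$ is actually needed: a value $d_j''=1$ means the two triangles adjacent to that edge form a unit square whose diagonal can be flipped, reducing the edge count, and a value $d_j''\leq 0$ (or all $d_j''=2$) is excluded because the developing image of $star(v_0)$ subtends an angle less than $\pi$ between the eigenlines $L_1$ and $L_2$. Your only use of minimality is to force the edges at $v_0$ to be primitive, but that is automatic: every edge of a lattice triangle of area $\frac{1}{2}$ is already primitive, so that argument does no work. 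Without negative-definiteness you cannot pin down $star(v_0)$ at all, since a non-convex union of basis triangles around $v_0$ with the same monodromy would not be isomorphic to the pseudo-fan of $(V_0,D')$.

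Second, the mechanism you propose for identifying $(d_1'',\dots,d_s'')$ with ${\bf d}'$ --- pairing the edges at $v_0$ directly with the vertices of the discrete hyperbola $\phi(\tilde{P})$ and reading off a block-by-block correspondence between runs of $2$'s in ${\bf d}$ and single values $b_i+3$ at $v_0$ --- cannot work as stated. The discrete hyperbola is the \emph{outer} boundary of the cone $C$ and, since the moment polygon is taken large, it is separated from $v_0$ by arbitrarily many lattice points, all of which are vertices of the triangulation; the triangles at $v_0$ only reach the layer of lattice points nearest $v_0$, so there is no incidence between the $f_j$ and the vertices of $\phi(\tilde{P})$. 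The correct route (the one the paper takes) is a uniqueness argument: once ${\bf d}_0$ is known to be negative-definite and $star(v_0)$ is known to contain no interior lattice points, its developing image is forced to be the complement, between $L_1$ and $L_2$, of the convex hull of the lattice points in that cone; the pseudo-fan of $(V_0,D')$ has the same characterization because its monodromy is conjugate to $N^{-1}$, which is the one place the classical duality between ${\bf d}$ and ${\bf d}'$ (cited to Inoue) enters. The combinatorial formula for ${\bf d}'$ is never verified edge-by-edge against $P$, and trying to do so is where your argument would break down.
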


\begin{proof} Let ${\bf d}_0=(d_{01},\dots,d_{0r})$ denote the negative self-intersections of the edges $(e_{01},\dots,e_{0r})$ emanating from $v_0$. We claim that $d_{0i}\geq 2$ for all $i$. First, we show that $d_{0i}\leq 0$ is impossible. Suppose for the sake of contradiction that $d_{0i}\leq 0$. Then the formula $$d_{0i}e_{0i}=e_{0(i-1)}+e_{0(i+1)}$$ implies that the angle $\angle (v_{i-1}v_0v_{i+1})$ subtended by $star(v_0)$ between $e_{0(i-1)}$ and $e_{0(i+1)}$ is at least $\pi$ in any integral-affine chart. But, by the definition of the integral-affine structure on $C$ from Proposition \ref{complete}, the image of the developing map of $star(v_0)$ lies within the region $R$. Thus $d_{0i}\leq 0$ is impossible, because the image of the developing map of $star(v_0)$ subtends an angle less than $\pi$---it subtends the angle formed at $v_0$ by $L_1$ and $L_2$.

If $d_{0i}=1$, then the union of the two triangles containing $e_{0i}$ is integral-affine equivalent to the unit square. But then we may alter the triangulation by flipping the diagonal of this square, thus decreasing the total number of edges emanating from $v_0$. This contradicts our assumption that the number of edges emanating from $v_0$ is minimal. Hence $d_{0i}\geq 2$ for all $i$. If $d_{0i}=2$ for all $i$, then the image of developing map subtends an angle of exactly $\pi$, which is also impossible. Hence $d_{0i}\geq 3$ for some $i$. Thus ${\bf d}_0$ is negative-definite. We remark that similar ideas arise when constructing minimal resolutions of non-smooth toric surfaces, see Section 2.6 of \cite{Ful93}.

The developing map, when restricted to the boundary of $star(v_0)$, maps to an infinite lattice polygon lying in $R$ and bounded by $L_1$ and $L_2$. Because ${\bf d}_0$ is negative-definite, the angles of this infinite lattice polygon are less than $\pi$, and thus, it bounds a convex region. Furthermore, the image of the developing map of $star(v_0)$ contains no lattice points in its interior because it is a union of basis triangles containing $v_0$. See Figure \ref{newfig}. This uniquely characterizes the image of the developing map of $star(v_0)$: It is the region between $L_1$ and $L_2$ in the complement of the convex hull of the lattice points between $L_1$ and $L_2$. We say $star(v_0)$ has property ($\star$).

\begin{figure}
\centering
\includegraphics[width = 4.5in]{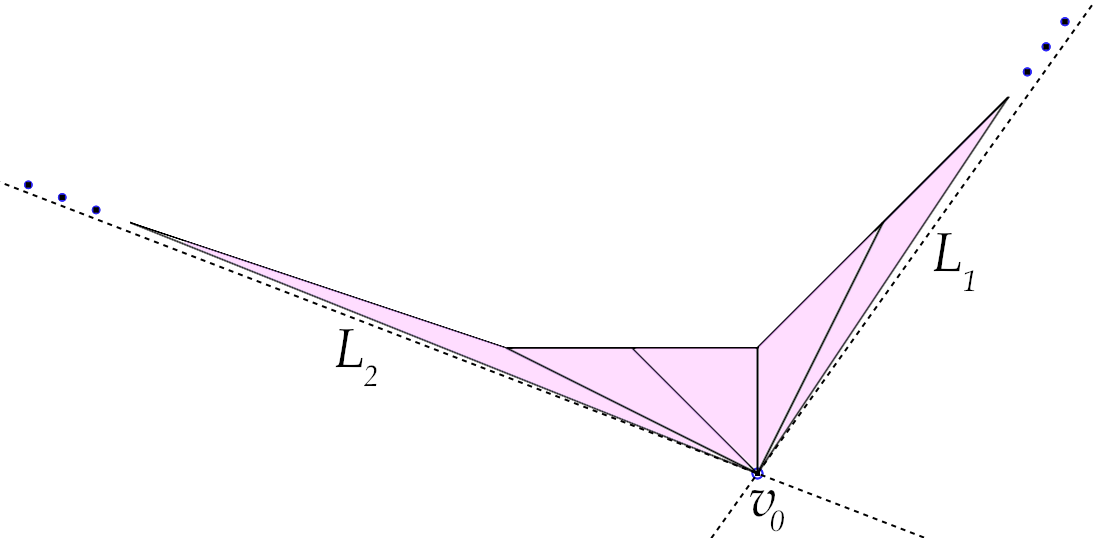}\caption{The image of the developing map of $star(v_0)$.}
\label{newfig}
\end{figure}

Let ${\bf d}=(d_1,\dots,d_n)$ and ${\bf d}'=(d_1',\dots,d_s')$. The following matrices are conjugate in $SL_2(\Z)$: $$ \prod_{i=1}^r \begin{pmatrix} 0 & -1 \\ 1 & d_{0i}\end{pmatrix}\sim N^{-1}=\left[\prod_{i=1}^n\begin{pmatrix} d_i & -1 \\ 1 & 0\end{pmatrix}\right]^{-1}\sim \prod_{i=1}^s \begin{pmatrix} 0 & -1 \\ 1 & d_i'\end{pmatrix}$$ where the last similarity is a general fact about dual cycles, see \cite{Ino77}. That is, the monodromy of $star(v_0)$ is conjugate to the monodromy of the pseudo-fan of $(V_0,D')$. By post-composing with an integral-affine transformation, we may assume that these monodromies are equal and that the developing map of the pseudo-fan of $(V_0,D')$ maps into the region between $L_1$ and $L_2$. Since ${\bf d}'$ is negative-definite, the image of the developing map of the pseudo-fan of $(V_0,D')$ is also characterized by property $(\star)$. Since the monodromy acts the same on these images, the pseudo-fan of $(V_0,D')$ and $star(v_0)$ are isomorphic, as triangulated integral-affine surfaces. \end{proof}

For every vertex $v_i$ with $i\neq 0$ of the triangulation of $\hat{S}[k]$, we have found an anticanonical pair $(V_i,D_i)$ whose pseudo-fan is $star(v_i)$. In addition, we have proved that the pseudo-fan of $(V_0,D')$ is $star(v_0)$. Consider the union of the surfaces $$\mathcal{X}_0:=\bigcup_{v_i\in \hat{S}[k]} (V_i,D_i)$$ where we identify $D_{ij}$ with $D_{ji}$ so that nodes of $D_i$ are identified with nodes of $D_j$. By Remark \ref{tpf}, $\mathcal{X}_0$ satisfies the triple point formula. So $\mathcal{X}_0$ satisfies all the assumptions of a Type III anticanonical pair $(\mathcal{X}_0,D)$. Theorem \ref{fm} implies that the cusp with resolution $D'$ is smoothable. \end{proof}

We now describe without proof a number of modifications of the construction, which still produce a Type III anticanonical pair $\mathcal{X}_0$ but in which various conditions are weakened.

\begin{modification}\label{overlap} We need not assume that the singularities introduced in the surgeries on $\overline{S}$ are distinct. The number of surgeries in which the vertex $v_i$ is involved (either as the vertex of a triangle removed from $\overline{S}$ for an internal blow-up, or as the end of a cut for a node smoothing) is equal to the charge $Q(V_i,D_i)$ of the anticanonical pair whose pseudo-fan is $star(v_i)$. In addition, the edges of the triangles removed for internal blow-ups may overlap, and may also overlap cuts for node smoothings. \end{modification}

\begin{modification}\label{less} We assumed that for every component of $\overline{D}$, the length of the associated boundary component of $\overline{S}$ was positive. This assumption is unnecessary---some may have length zero (but at least three edges must have positive length, because $\overline{S}$ must have nonempty interior to make surgeries). \end{modification}

\begin{modification} The internal blow-ups on a boundary component of $\overline{S}$ decrease its length. After the surgeries, we may allow some of the boundary components of $S$ to have length zero. When only some of the boundary components have length zero, we continue with the construction by applying the developing map to a collar neighborhood of the boundary, even though $(x_i,y_i)$ may cease to be a lattice basis.\end{modification}

\begin{modification}\label{empty} If, after the surgeries, {\it all} the boundary components have length zero, then $S$ has no boundary, and is already homeomorphic to a sphere. Then we may triangulate $S$ directly to produce the dual complex $\Gamma(\mathcal{X}_0)$ of a Type III anticanonical pair.\end{modification}

\begin{remark} Modification \ref{empty} is always possible, thus eliminating the need for the completion process described in Proposition \ref{complete}, but the construction is significantly more delicate.
\end{remark}

We now present an example incorporating Modifications \ref{overlap}, \ref{less}, and \ref{empty} of the construction of $\mathcal{X}_0$:

\begin{example} Let $(Y,D)$ be a negative-definite anticanonical pair such that $Q(Y,D)=3$. It can be shown that all negative-definite anticanonical pairs with $Q(Y,D)=3$ have three disjoint internal exceptional curves, which can be blown down to a toric surface $$\pi\,:\,(Y,D)\rightarrow(\overline{Y},\overline{D}).$$ We call $(\overline{Y},\overline{D})$ a {\it toric model}. Let $\overline{D}_{a_1}$, $\overline{D}_{a_2}$, and $\overline{D}_{a_3}$ be the three components of $\overline{D}$ that receive the internal blow-ups. Up to scaling by $\Z$, there is a unique moment polygon $\overline{S}$ of the toric model in which the boundary components $\overline{P}_{a_1}$, $\overline{P}_{a_2}$, and $\overline{P}_{a_3}$ are the only edges with nonzero length.

To construct $S$, we perform internal blow-ups on the three boundary components of $\overline{S}$. To do so, we must delete a multiple of a basis triangle from each of the edges. For any anticanonical pair $(Y,D)$ of charge three, it can be proven that $\overline{S}$ has room to perform internal blow-ups that decrease the lengths of all three edges to zero. So $\partial S$ is empty, and $S$ may be directly triangulated into basis triangles, from which we construct $\mathcal{X}_0$.

\begin{figure}
\includegraphics[width = 3.2in]{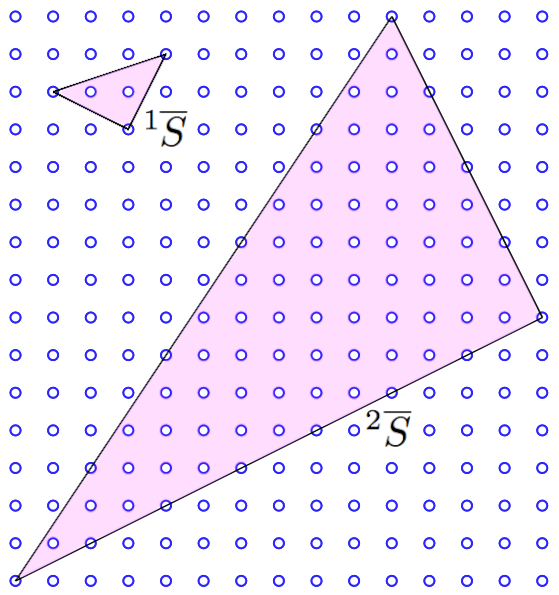}\caption{Moment polygons for $(^i\overline{Y},\,^i\overline{D})$.}
\label{fig9} \vspace{10pt}
\includegraphics[width = 3.2in]{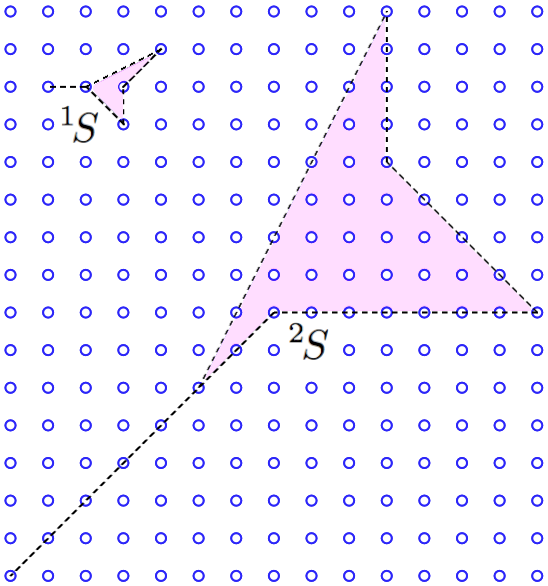} \caption{Almost toric bases for $(^iY,D)$.}
\label{fig10}
\end{figure}

Consider the cusp singularity $D'$ with cycle ${\bf d}'=(6,9)$. The dual cycle is given by the formula $${\bf d}=(3,2,2,2,3,2,2,2,2,2,2).$$ There are two distinct deformation families of pairs with anticanonical cycle $D$. The deformations preserve the classes of exceptional curves, so each deformation family is associated to a different toric model. Let $(^iY,D)$ with $i=1,2$ be two anticanonical pairs representing these two deformation families. The cycles of negative self-intersections of the two toric models are \begin{align*} ^1{\bf \overline{d}}&=(3,2,1,2,3,1,2,2,2,2,1) \\ ^2{\bf \overline{d}}&=(3,2,2,1,3,2,1,2,2,2,1) \end{align*}

By blowing down exceptional curves in $^i\overline{D}$, we can draw fans for $(^i\overline{Y},\,^i\overline{D})$, from which we can construct moment polygons $^i\overline{S}$ for $i=1,2$. Using Modification \ref{less}, we choose a moment polygon $^1\partial\overline{S}$ whose only boundary components of positive length are $^1\overline{P}_3$, $^1\overline{P}_6$, and $^1\overline{P}_{11}$, while the only components of $^2\partial \overline{S}$ of positive length are $^2\overline{P}_4$, $^2\overline{P}_7$, and $^2\overline{P}_{11}$ as in Figure \ref{fig9}.

\begin{figure}
\includegraphics[width = 4.5in]{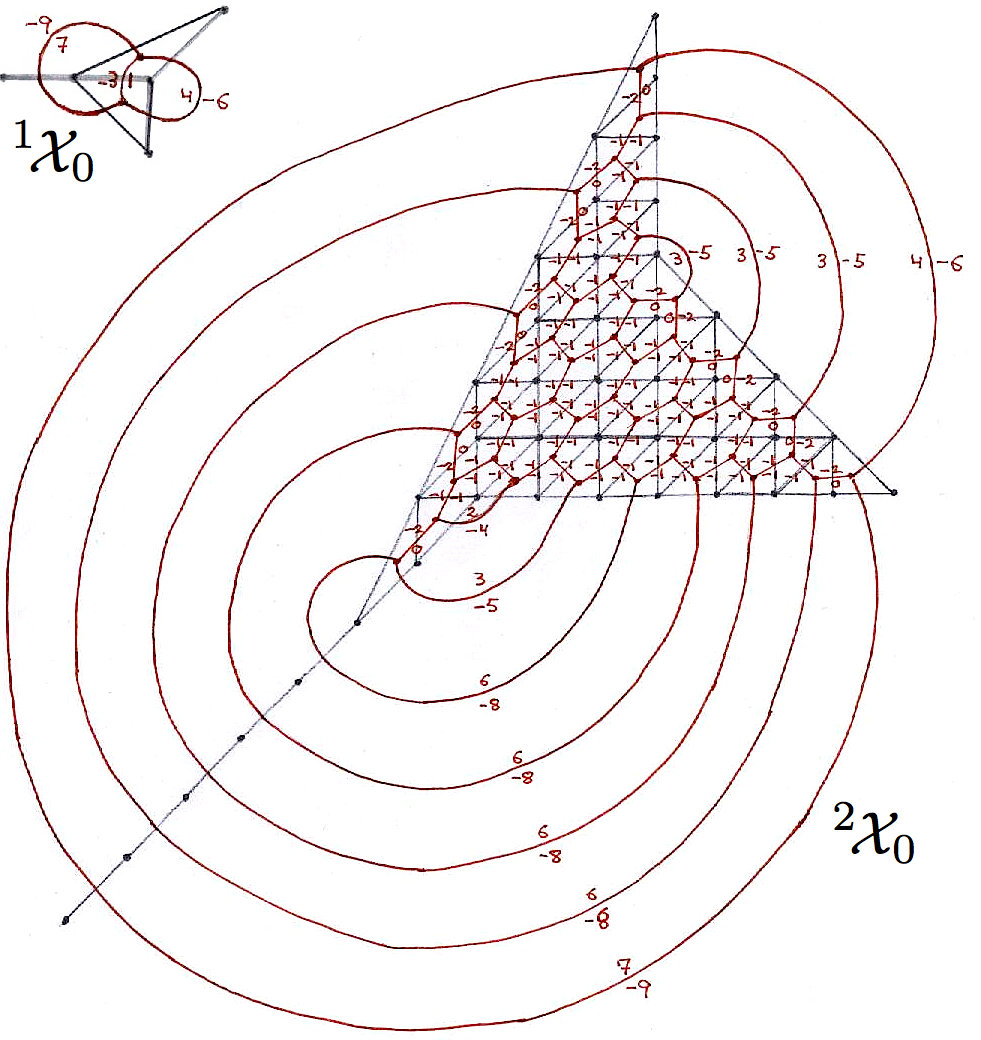}\caption{Two Type III anticanonical pairs $(^i\!\mathcal{X}_0,D)$.}
\label{fig11}
\end{figure}

We perform three internal blow-ups on $^i\overline{S}$ by deleting a multiple of a basis triangle resting on each of the three edges, then gluing the remaining two edges of each triangle. Furthermore, using Modification \ref{empty}, we choose surgeries large enough to reduce the length of the boundary to zero. The resulting integral affine surfaces  $^i\!S$ are shown Figure \ref{fig10}. Because $^i\!S$ has no boundary, we immediately triangulate it into basis triangles, and construct the simple normal crossings surface $^i\!\mathcal{X}_0$ whose dual complex is $^i\!S$. Note that Modification \ref{overlap} has been applied to $^1S$ as two singularities introduced by surgeries overlap. The triangulations of $^i\!S$ and the surfaces $^i\!\mathcal{X}_0$ are shown in Figure \ref{fig11}. The hyperbolic Inoue pair $(V_0,D')$ is the outer face in both illustrations.

The surface $^i\!\mathcal{X}_0$ smooths to give a family $^i\!\mathcal{X}\rightarrow \Delta$ of surfaces over the disc whose general fiber is a pair with anticanonical cycle $D$. It is a natural question to ask whether the general fiber of $^i\!\mathcal{X}$ is deformation-equivalent to $(^iY,D)$. In later work, we provide an affirmative answer to this question, verifying Conjecture 6.1 of \cite{FM83}. To prove smoothability of the cusp $D'=(6,9)$, Friedman and Miranda exhibited the special fiber $^1\!\mathcal{X}_0$. The surface $^2\!\mathcal{X}_0$ is the alternative special fiber with $50$ triple points that they conjectured to exist. 


\end{example}

\end{document}